\newtheorem{theorem}{Theorem}[section]
\newtheorem{lemma}[theorem]{Lemma}
\newtheorem{definition}[theorem]{Definition}
\newtheorem{example}[theorem]{Example}
\newtheorem{xca}[theorem]{Exercise}
\newtheorem{corollary}[theorem]{Corollary}
\newtheorem{proposition}[theorem]{Proposition}
\newtheorem{remark}[theorem]{Remark}
\numberwithin{equation}{section}
\newcommand{\abs}[1]{\lvert#1\rvert}
\newcommand{\blankbox}[2]{%
  \parbox{\columnwidth}{\centering
    \setlength{\fboxsep}{0pt}%
    \fbox{\raisebox{0pt}[#2]{\hspace{#1}}}%
  }%
}
\begin{document}

\title[The $k$-regular partition function modulo composite integers $M$]{Distribution of the $k$-regular partition function modulo composite integers $M$}

\author{Yiwen Lu}
\address{Department of Mathematics, Nanjing University, Nanjing 210093, People’s Republic of China}
\email{luyw@smail.nju.edu.cn}
\author{Xuejun Guo}
\address{Department of Mathematics, Nanjing University, Nanjing 210093, People’s Republic of China}
\email{guoxj@nju.edu.cn}
\thanks{The authors are supported by NSFC 11971226 and NSFC 12231009.}



\keywords{$k-$regular partition function, Ramanujan-type congruence.}

\begin{abstract}
	Let $b_k(n)$ denote the $k-$regular partitons of a natural number $n$. In this paper, we study the behavior of $b_k(n)$ modulo composite integers $M$ which are coprime to $6$. Specially, we prove that for arbitrary $k-$regular partiton function $b_k(n)$ and integer $M$ coprime to $6$,  there are infinitely many  Ramanujan-type congruences of $b_k(n)$ modulo $M$.
\end{abstract}

\maketitle

\section{\textbf{Introduction and statement of results}}
Suppose that  $n \geqslant 1$ is an integer, a partition of $n$ is any nonincreasing sequence of natural numbers, called parts, whose sum is $n$. Generally, denote by $p(n)$ the number of partitions for $n \geqslant 1$. For convenience, let $p(0) = 1$ and $p(n)=0 $ if $n$ is not a positive integer. Then when adding some special restrictions to the above partition, we will get a variety of functions with interesting arithmetic properties. Now we  introduce our main research object in this paper.

For an arbitrary integer $k\geqslant2$,  define a $k-$regular partition for a natural number $n$ to be a partition that none of its pairts have $k$ as a factor. Respectively, we denote by $b_k(n)$ the number of $k-$regular paititions of $n$. We specify $b_k(0)=1$ and $b_k(n)=0$ for $n\notin Z_{\geqslant0}$ similarly. Euler made lots of very significant contributions to the research of partitions, including his discovery of the generating functions for many types of partition functions. For $k$-regular partitions, we have 
\begin{equation}\nonumber
	\sum_{n=0}^{\infty}b_k(n)q^n=\prod_{n=1}^{\infty}\frac{1-q^{kn}}{1-q^n}.
\end{equation}
Let $m \geqslant 5$ be a prime number, and for an arbitrary integer $k\geqslant2$ we define the power series
\begin{equation}\label{FKM}
	F_{k,m}(z) := \sum_{\substack{n\geqslant 0 \\ mn\equiv k-1   \;(\mathrm{mod}\;24)}} b_k\left ( \frac{mn-k+1}{24}  \right ) q^n.
\end{equation}

\begin{theorem}\label{consturction}
	If $m\geqslant5$ is prime and $k\geqslant2$ is an  integer, then let $	F_{k,m}$ be defined as in $(\ref{FKM})$. For each positive integer $j$, there exists a modular form 
$$f_{k,m,j}\in S_{\frac{m^{j+1}-m^j}{2} }( \Gamma _0(576km), ( \frac{km}{\boldsymbol{\cdot} })) $$
with the property that 
$$f_{k,m,j}\equiv F_{k,m}\;(\mathrm{mod}\;m^{j+1} ).$$
\end{theorem}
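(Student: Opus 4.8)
The plan is to realize $F_{k,m}$ as the image of an explicit eta-quotient under the Hecke operator $U_m$, and then to manufacture the congruence by multiplying that eta-quotient by a second one that is congruent to $1$ modulo $m^{j+1}$ while carrying all of the weight.

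First I would record the eta-quotient shape of the generating function. Writing $\eta(z)=q^{1/24}\prod_{n\geqslant 1}(1-q^n)$ and setting
$$a(z):=\frac{\eta(24kz)}{\eta(24z)}=q^{k-1}\prod_{n\geqslant 1}\frac{1-q^{24kn}}{1-q^{24n}}=\sum_{n\geqslant 0}b_k(n)\,q^{24n+k-1},$$
I obtain a $q$-series with integral exponents. Recall that $U_m$ acts on $q$-expansions by $\big(\sum_{N}c(N)q^N\big)\mid U_m=\sum_{N}c(mN)q^N$, i.e. it keeps the exponents divisible by $m$ and divides them by $m$. A direct comparison shows that the coefficient of $q^N$ in $a\mid U_m$ equals $b_k\!\big(\tfrac{mN-k+1}{24}\big)$ precisely when $mN\equiv k-1\ (\mathrm{mod}\ 24)$ and vanishes otherwise, so that $a\mid U_m=F_{k,m}$. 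Note that since $k\geqslant 2$ the smallest exponent occurring in $a$ is $k-1\geqslant 1$, so both $a$ and $F_{k,m}$ vanish at the cusp $\infty$.

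Next I would introduce the congruence engine
$$E(z):=\frac{\eta(24z)^{m^{j+1}}}{\eta(24mz)^{m^{j}}}=\prod_{n\geqslant 1}\frac{(1-q^{24n})^{m^{j+1}}}{(1-q^{24mn})^{m^{j}}},$$
the fractional $q$-powers cancelling. An induction on $j$, based on $(1-X)^{m}\equiv 1-X^{m}\ (\mathrm{mod}\ m)$ together with $m^{2j}\geqslant m^{j+1}$ for $j\geqslant 1$, gives the termwise congruence $(1-X)^{m^{j+1}}\equiv (1-X^{m})^{m^{j}}\ (\mathrm{mod}\ m^{j+1})$, whence $E\equiv 1\ (\mathrm{mod}\ m^{j+1})$. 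Because $E$ has integral coefficients, $a\,E\equiv a\ (\mathrm{mod}\ m^{j+1})$, and since $U_m$ acts integrally on $q$-expansions it respects this congruence. Setting $f_{k,m,j}:=(a\,E)\mid U_m$ therefore yields $f_{k,m,j}\equiv a\mid U_m=F_{k,m}\ (\mathrm{mod}\ m^{j+1})$, which is the asserted congruence.

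It remains to prove that $f_{k,m,j}$ is a cusp form of the stated weight, level and character. The product is a single eta-quotient,
$$a(z)E(z)=\eta(24kz)\,\eta(24z)^{m^{j+1}-1}\,\eta(24mz)^{-m^{j}},$$
of weight $\tfrac12\big(1+(m^{j+1}-1)-m^{j}\big)=\tfrac{m^{j+1}-m^{j}}{2}$. I would then apply the Ligozat/Newman criteria with level $576km$: the divisors $24,24k,24m$ all divide $576km$, and the two congruences $\sum_\delta \delta r_\delta=24(k-1)\equiv 0$ and $\sum_\delta (576km/\delta)r_\delta\equiv 0\ (\mathrm{mod}\ 24)$ hold, the latter forcing the factor $576=24^2$ so that each $576km/\delta$ is itself divisible by $24$. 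The nebentypus is computed from $\prod_\delta \delta^{r_\delta}=24^{\,m^{j+1}-m^{j}}\,k\,m^{-m^{j}}$ via the Ligozat formula, the square $24$-part dropping out and leaving the quadratic character $\big(\tfrac{km}{\,\cdot\,}\big)$ (one must track the sign $(-1)^{(m^{j+1}-m^j)/2}$ coming from the weight parity). Finally, because $m\mid 576km$, the operator $U_m$ maps $S_{(m^{j+1}-m^j)/2}\big(\Gamma_0(576km),\chi\big)$ into itself and preserves cuspidality, placing $f_{k,m,j}$ in the asserted space.

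The step I expect to be the real obstacle is the cusp analysis hidden in the previous paragraph. One must invoke the explicit order formula for an eta-quotient at an arbitrary cusp $c/d$ of $\Gamma_0(576km)$ and verify that $aE$ has nonnegative — indeed positive — order there, so that it is a genuine holomorphic cusp form rather than merely a weakly holomorphic one; here the large exponent $m^{j+1}-1$ on $\eta(24z)$ and the negative exponent $-m^{j}$ on $\eta(24mz)$ must conspire so that the positive contributions dominate the potential poles. One must also confirm that applying $U_m$ does not reintroduce poles at the cusps. This cusp bookkeeping, rather than the congruence itself, is where the precise level $576km$ and the cusp-form conclusion are genuinely pinned down.
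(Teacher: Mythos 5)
Your congruence argument is sound and is essentially the paper's: your $E(z)=\eta^{m^{j+1}}(24z)/\eta^{m^j}(24mz)$ is exactly the quotient of Lemma \ref{congruence} with $(n_1,n_2,i)=(24,m,j+1)$, and the identity $\frac{\eta(24kz)}{\eta(24z)}\mid U(m)=F_{k,m}$ is a correct $q$-series computation. But note that your candidate $(aE)\mid U(m)$ is \emph{not} the same function as the paper's $f_{k,m,j}$ in (\ref{def of FKMJ}): the paper applies $U(m)$ to $\eta(24kz)\eta^m(24mz)/\eta(24z)$ first and multiplies by the congruence factor afterwards, and since $U(m)$ does not commute with multiplication the two constructions differ. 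That difference is exactly where your proof breaks.

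The gap is the modularity half, and it is not merely "bookkeeping": the Ligozat/Newman holomorphy check you propose to run on $aE=\eta(24kz)\,\eta^{m^{j+1}-1}(24z)\,\eta^{-m^j}(24mz)$ \emph{fails}. At a cusp $a/c$ of $\Gamma_0(576km)$ with $c=24m$ (take $\gcd(k,m)=1$ for concreteness), the sign of the order is that of $\sum_\delta r_\delta\gcd(c,\delta)^2/\delta = 24(m^{j+1}-1)+\tfrac{24}{k}-24m^{j+1}=-24+\tfrac{24}{k}<0$ for $k\geqslant 2$, so $aE$ has a pole there and is only weakly holomorphic; the exponents do not "conspire" in your favor. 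Consequently the final step — "$U(m)$ maps the space into itself and preserves cuspidality" — has nothing to apply to; what you actually need is that $U(m)$ \emph{annihilates} the existing poles of $aE$. That is a genuinely delicate statement requiring the expansion of $U(m)f$ at each cusp via $\frac1m\sum_v f(\frac{z+v}{m})$, and it is precisely the content the paper imports from Ono's Theorem 8 and Ahlgren's Proposition 2 — which is why the paper arranges the construction so that $U(m)$ hits the specific quotient $\eta(24kz)\eta^m(24mz)/\eta(24z)$ (respectively $\eta^m(mz)/\eta(z)$ after the $V(24)$/$U(m)$ interchange of Lemma \ref{communicative}) to which those cited results apply. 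Without either supplying that cusp analysis for your $aE$ or restructuring to match the cited propositions, the proof of membership in $S_{\frac{m^{j+1}-m^j}{2}}(\Gamma_0(576km),(\frac{km}{\cdot}))$ is missing.
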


\noindent Specific construction of the cusp forms $f_{k,m,j}$  in the above theorem will be described in the next section. 

For the case $j = 0$, the construction of the cusp forms are different from the ones in the above theorem. In fact, the following proposition shows us that $F_{k,m}$ is congruent modulo $m$ to a cusp form which is in the space $S_{\frac{m^{2}-m}{2} }\left( \Gamma _0(576km), ( \frac{km}{\boldsymbol{\cdot} })\right)$  when fixing $k$ and $m$.  

\begin{proposition}\label{j=0}
	If $m\geqslant5$ is prime and $k\geqslant2$ is an  integer, then there exists a modular form 
	$$f_{k,m} \in S_{\frac{m^{2}-m}{2} }( \Gamma _0(576km), ( \frac{km}{\boldsymbol{\cdot} })) $$
	with the property
	$$	f_{k,m} \equiv	F_{k,m}(z) \;(\mathrm{mod}\;m).$$
\end{proposition}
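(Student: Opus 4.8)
The plan is to realize $F_{k,m}$ as the image under a Hecke operator $U_m$ of an explicit eta-quotient, and to correct that eta-quotient modulo $m$ so that the image lands in the stated space of cusp forms. Writing $\eta(z)=q^{1/24}\prod_{n\ge1}(1-q^n)$ with $q=e^{2\pi i z}$, Euler's product gives
\[
\sum_{N\ge0}b_k(N)q^N=\prod_{n\ge1}\frac{1-q^{kn}}{1-q^n}=q^{(1-k)/24}\frac{\eta(kz)}{\eta(z)},
\]
so after the substitution $z\mapsto 24z$ the weight-$0$ weakly holomorphic function
\[
g(z):=\frac{\eta(24kz)}{\eta(24z)}=\sum_{N\ge0}b_k(N)\,q^{24N+k-1}
\]
is obtained. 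Defining $U_m$ by $\sum a(n)q^n\mapsto\sum a(mn)q^n$, this operator extracts exactly the terms with $24N+k-1\equiv0\pmod m$, and a direct check of indices ($n=(24N+k-1)/m$) shows $g\mid U_m=F_{k,m}$.

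Next I would build a holomorphic companion congruent to $g$ modulo $m$. Since $(1-X)^m\equiv1-X^m\pmod m$, the eta-quotient
\[
E(z):=\frac{\eta(24z)^m}{\eta(24mz)}=\prod_{n\ge1}\frac{(1-q^{24n})^m}{1-q^{24mn}}\equiv1\pmod m
\]
has integral coefficients and weight $\frac{m-1}{2}$. I set
\[
G(z):=g(z)\,E(z)^m=\eta(24kz)\,\eta(24z)^{m^2-1}\,\eta(24mz)^{-m},
\]
an eta-quotient of weight $\frac{m^2-m}{2}$ with integral $q$-expansion satisfying $G\equiv g\pmod m$; the exponent $m$ (rather than $1$) is what produces the stated weight and, as explained below, enough vanishing off $m$ for a single $U_m$ to suffice. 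Applying Ligozat's criteria one checks $\sum_\delta\delta r_\delta=24(k-1)\equiv0$ and $\sum_\delta(N/\delta)r_\delta\equiv0\pmod{24}$ precisely when the level is $N=576km$ (this is exactly what forces the factor $24^2$), and the nebentypus read off from $s=\prod_\delta\delta^{r_\delta}=24^{m^2-m}k\,m^{-m}$ is, after discarding squares, the Kronecker character $\left(\frac{km}{\boldsymbol{\cdot}}\right)$. Thus $G$ transforms on $\Gamma_0(576km)$ with the asserted weight and character.

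The form $G$ is not yet holomorphic. Using that the order of an eta-quotient at a cusp $a/c$ is proportional to $\sum_{\delta\mid N}\gcd(c,\delta)^2 r_\delta/\delta$, one finds this quantity is $\ge0$ at every cusp $a/c$ with $m\nmid c$, and is strictly positive at $\infty$ (where $G$ vanishes to order $k-1\ge1$), while at cusps $a/c$ with $m\mid c$ it equals $\tfrac1{24}\bigl(\gcd(c/m,24k)^2/k-\gcd(c/m,24)^2\bigr)$, which is negative when $\gcd(k,m)=1$. Hence $G$ is weakly holomorphic, holomorphic away from the cusps lying above $m$. I would then set $f_{k,m}:=G\mid U_m$. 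Because $U_m$ commutes with reduction mod $m$ and $G\equiv g\pmod m$, we get $f_{k,m}\equiv g\mid U_m=F_{k,m}\pmod m$; and since $m\mid 576km$, the operator $U_m$ preserves the weight $\frac{m^2-m}{2}$, the level $576km$ and the character $\left(\frac{km}{\boldsymbol{\cdot}}\right)$.

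The hard part will be to show that $f_{k,m}=G\mid U_m$ is in fact holomorphic, indeed cuspidal. This is precisely where $U_m$ must absorb the poles of $G$ at the cusps above $m$: one has to expand $G\mid U_m$ at each such cusp through coset representatives for $U_m$ and verify that a single application raises the order to something positive, the remaining cusps and $\infty$ being controlled by the inequalities above. This cusp-by-cusp analysis of the $U_m$-action (in the spirit of Ahlgren--Ono and Treneer) is the technical heart of the argument; once it is in place, $f_{k,m}\in S_{\frac{m^2-m}{2}}\!\left(\Gamma_0(576km),\left(\frac{km}{\boldsymbol{\cdot}}\right)\right)$ with $f_{k,m}\equiv F_{k,m}\pmod m$, as claimed.
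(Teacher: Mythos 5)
Your candidate form is in fact identical to the paper's: by the standard identity $\bigl(f(z)\,g(mz)\bigr)\mid U(m)=\bigl(f\mid U(m)\bigr)(z)\cdot g(z)$ applied with $g=\eta^{-m}(24z)$, your $G\mid U_m$ equals the paper's $f_{k,m}=\dfrac{\{\eta(24kz)\,\eta^{m^2-1}(24z)\}\mid U(m)}{\eta^{m}(24z)}$, and your derivation of the congruence $f_{k,m}\equiv F_{k,m}\pmod m$ via $(1-X)^m\equiv 1-X^m$ is the same as the paper's. The problem is that you stop exactly where the proposition actually lives: you write down the cusp-order data for $G$, observe that it has poles at (some) cusps with $m\mid c$, and then declare that showing $U_m$ absorbs these poles is ``the technical heart'' to be carried out later. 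That verification is the entire non-trivial content of the statement --- the congruence part is a two-line computation --- so as written the proposal is a reduction of the proposition to an unproved claim, not a proof. The paper closes this gap not by a cusp-by-cusp expansion of $G\mid U_m$ but by quoting a known result (Theorem~8 of Ono's \emph{Distribution of the partition function modulo $m$}) which asserts that $\dfrac{\eta^{m^2-1}(z)\mid U(m)\mid V(24)}{\eta^{m}(24z)}$ lands in $S_{\frac{m^2-m-1}{2}}(\Gamma_0(576m),\chi_3)$, and then multiplies by $\eta(24kz)\mid U(m)\in S_{\frac12}(\Gamma_0(576km),(\frac{km}{\cdot})\chi_3)$; you could finish your argument the same way instead of redoing the Ahlgren--Ono/Treneer-style analysis from scratch.

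Two smaller inaccuracies to fix if you do carry out the direct route. First, your sign claim for the cusps above $m$ is wrong as stated: at a cusp $a/c$ with $c=mc'$ the relevant quantity $\frac1{24}\bigl(\gcd(c',24k)^2/k-\gcd(c',24)^2\bigr)$ is negative for $c'=1$ but \emph{positive} for instance at $c'=24k$ (where it equals $24^2(k-1)/24>0$), so $G$ has poles only at \emph{some} of the cusps with $m\mid c$, not all of them; the subsequent $U_m$ analysis must be organized accordingly. Second, the nebentypus of an integral-weight eta-quotient is $\bigl(\frac{(-1)^{\ell}s}{\cdot}\bigr)$ with $\ell=\frac{m^2-m}{2}$, so the factor $(-1)^{\ell}$ cannot simply be discarded along with the squares; you should check it (or note that the paper makes the same identification of the character).
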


\noindent The forms $f_{k,m}$  will also be described in detail in the next section.

\begin{theorem}\label{boring}
	Let $m\geqslant5$ be prime, $k\geqslant2$ and $j$ be positive integers. Suppose there is one $n_0\equiv 24^{-1}(1-k) \;(\mathrm{mod}\;m)$  such that 
	$$b_k(n_0)\equiv e \;(\mathrm{mod}\;m^j),\quad  (e,m)=1.$$
	Then 
	$$\#\left \{ n<N:b_k(n)\equiv r\;(\mathrm{mod}\;m^j ) \right \} \gg \begin{cases}
		N & \text{ if } r\equiv 0\;(\mathrm{mod}\;m^j ) \\
		N/\log N   & \text{ if } r\not\equiv 0\;(\mathrm{mod}\;m^j ).
	\end{cases}$$
\end{theorem}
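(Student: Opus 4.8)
The plan is to push the statement onto the Fourier coefficients of the cusp form furnished by Theorem~\ref{consturction} (or by Proposition~\ref{j=0} when $j=1$) and then to feed these coefficients into two standard inputs: a theorem of Serre on the density of vanishing coefficients for the case $r\equiv 0$, and the Chebotarev/Galois-representation machinery for the case $r\not\equiv 0$. First I would fix notation: write $g=\sum_{n\geqslant 1}a(n)q^n$ for the cusp form with $g\equiv F_{k,m}\pmod{m^{j}}$ (using the theorem with parameter $j-1$ when $j\geqslant 2$), of integer weight $k'\geqslant 2$, level $576km$ and character $(\tfrac{km}{\boldsymbol{\cdot}})$. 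For an index $n$ with $mn\equiv k-1\ (\mathrm{mod}\ 24)$ I set $\nu=(mn-k+1)/24$, so that $a(n)\equiv b_k(\nu)\pmod{m^{j}}$, while $a(n)\equiv 0\pmod{m^{j}}$ for every non-admissible $n$. The map $n\mapsto\nu$ is a bijection from the admissible residue class modulo $24$ onto the progression $\nu\equiv 24^{-1}(1-k)\ (\mathrm{mod}\ m)$, and $\nu<N$ corresponds to $n\ll N$; hence it suffices to bound below the number of admissible $n\ll N$ with $a(n)\equiv r\pmod{m^{j}}$. The hypothesis provides one admissible index, namely $n_0'=(24n_0+k-1)/m$, at which $a(n_0')\equiv e\pmod{m^{j}}$ is a unit.

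For $r\equiv 0$ I would invoke Serre's theorem that, for a holomorphic integer-weight form with algebraic-integer coefficients, the set of $n$ with $a(n)\not\equiv 0\pmod{m^{j}}$ has density $0$. Intersecting with the single residue class modulo $24$ that makes $n$ admissible removes only a constant factor, so a positive proportion, that is $\gg N$, of admissible $n\ll N$ satisfy $a(n)\equiv 0\pmod{m^{j}}$. Transferring back through the bijection gives $\gg N$ values $\nu<N$ with $b_k(\nu)\equiv 0\pmod{m^{j}}$, the asserted bound.

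The main work, and the main obstacle, is the case $r\not\equiv 0$: I must produce $\gg N/\log N$ admissible indices with $a\equiv r$, and the shape $N/\log N$ signals that these indices should be of the form $pn_0'$ with $p$ ranging over a positive-density set of primes. Decomposing $g$ into Hecke eigenforms over the ring of integers $\mathcal{O}$ of a suitable number field and attaching to the relevant eigenform its mod-$m^{j}$ Deligne representation $\rho$, Chebotarev's density theorem shows that for each value $s$ in the image of $\mathrm{tr}\,\rho$ the primes $p$ with $a(p)\equiv s\pmod{m^{j}}$ form a set of positive density. Restricting further to $p\equiv 1\ (\mathrm{mod}\ 24)$ with $p\nmid n_0'$ keeps the index $pn_0'$ admissible and coprimality intact, so the multiplicative relation $a(pn_0')\equiv a(p)\,a(n_0')\equiv s\,e\pmod{m^{j}}$ is legitimate; taking $s\equiv re^{-1}\pmod{m^{j}}$ then forces $a(pn_0')\equiv r$ for a positive-density set of primes $p$, and the distinct admissible indices $pn_0'\ll N$ number $\gg N/\log N$.

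The delicate points I expect to fight with are, first, replacing the non-eigenform $g$ by a Hecke eigenform congruent to it modulo $m^{j}$ in a way compatible with the chosen coefficient $a(n_0')$, so that the multiplicativity $a(pn_0')\equiv a(p)a(n_0')$ is available; and second, verifying that the target residue $re^{-1}$ genuinely lies in the image of $\mathrm{tr}\,\rho$, equivalently that the mod-$m^{j}$ Galois image is large enough to realize it. This is precisely where the unit hypothesis $(e,m)=1$ and a lower bound on the size of the image must enter. Once these are secured, merging the Chebotarev condition on $a(p)$ with the congruence $p\equiv 1\ (\mathrm{mod}\ 24)$ over the single Galois extension cut out by $\rho$ and $\mathbb{Q}(\zeta_{24m^{j}})$ is routine, and completes the estimate.
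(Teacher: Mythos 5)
Your reduction to the coefficients of the cusp form from Theorem \ref{consturction} and your treatment of the case $r\equiv 0\ (\mathrm{mod}\ m^j)$ are sound and essentially match the paper, which simply quotes Corollary \ref{Rough estimate} there. The case $r\not\equiv 0$, however, contains a genuine gap, located exactly at the two points you yourself flag as delicate. First, $f_{k,m,j}$ is not a Hecke eigenform, so the multiplicativity $a(pn_0')\equiv a(p)a(n_0')$ is not available; decomposing into eigenforms does not repair this, because the congruence $f_{k,m,j}\equiv F_{k,m}\ (\mathrm{mod}\ m^j)$ does not descend to individual components (whose coefficients need not be rational and whose decomposition may involve denominators), and the coefficient of the sum is $\sum_i a_i(p)a_i(n_0')$, which you cannot control unless all the $a_i(p)$ are simultaneously congruent to your target. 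Second, and more seriously, nothing guarantees that $re^{-1}$ lies in the image of $\mathrm{tr}\,\rho$ modulo $m^j$: the mod-$m^j$ Galois image may be small (the components could be congruent to CM or Eisenstein data mod $m$), and the hypotheses of the theorem give you no lower bound on it. So the Chebotarev step cannot be completed as stated.

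The paper sidesteps both obstacles by using Serre's theorem in the form of Theorem \ref{sea(n)}: for any integer-weight cusp form with integer coefficients and any modulus $M$, a positive-density set of primes $l$ satisfies $a(nl^r)\equiv(r+1)a(n)\ (\mathrm{mod}\ M)$ for every $r\geqslant1$ and every $n$ coprime to $l$. That statement already absorbs the eigenform decomposition and uses only the \emph{identity} conjugacy class, where every component has $a_i(l)\equiv 1+l^{k'-1}\equiv 2$; this class always has positive density, so no largeness of the image is required. The free parameter is then the exponent rather than the trace: choosing $r_i+1\equiv i(2e)^{-1}\ (\mathrm{mod}\ m^j)$ and applying the theorem twice (once with $l_0^{r_i}$, once with a varying prime $l$ of exponent one) produces $b_k$-values congruent to $2(r_i+1)e\equiv i$ for every nonzero residue $i$, at $\gg\pi(N)\gg N/\log N$ indices. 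To close your argument you would need to replace ``find $p$ with $a(p)\equiv re^{-1}$'' by ``take Frobenius trivial and vary the exponent $r$''; as written, the proposal does not go through.
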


Moreover, we find that Theorem \ref{FKM} has provided conditions under which we can explore the arithmetic properties of congruences for an arbitrary $k-$regular function $b_k(n)$ with composite modulo $M$. The technique is to use the classical theories of Serre and the action of $Hecke$ $operators$ (similar to the operation of \cite{Ono} and \cite{Scott com M}).

\begin{theorem}\label{thm 1.3}
	Suppose $M$ is a positive integer which is coprime to $6$. Let $P_{M}$ be the product of all of the prime factors of $M$. Then for an abtrary integer $k\geqslant2$, there exists a positive density of the squarefree integers $L\equiv \pm 1 \;(\mathrm{mod}\;576kM)$ have the property that 
	$$ b_k\left ( \frac{P_{M}Ln-k+1}{24}  \right ) \equiv 0 \;(\mathrm{mod}\;M)$$
	for every positive integer $n$ coprime to $L$.
\end{theorem}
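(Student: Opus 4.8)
The plan is to handle one prime power of $M$ at a time and glue the results with the Chinese Remainder Theorem. Write $M=p_1^{a_1}\cdots p_t^{a_t}$, so that $P_M=p_1\cdots p_t$. For each $i$, Theorem~\ref{consturction} (with $m=p_i$ and $j=a_i-1$) when $a_i\geqslant 2$, and Proposition~\ref{j=0} when $a_i=1$, produces a cusp form
$$f_i\in S_{\lambda_i}\!\left(\Gamma_0(576kp_i),\left(\tfrac{kp_i}{\boldsymbol{\cdot}}\right)\right),\qquad f_i\equiv F_{k,p_i}\ (\mathrm{mod}\ p_i^{a_i}),$$
with $\lambda_i=(p_i^{a_i}-p_i^{a_i-1})/2$. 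Reading off the definition \eqref{FKM}, the coefficient of $q^{(P_M/p_i)Ln}$ in $F_{k,p_i}$ is exactly $b_k\!\left(\frac{P_MLn-k+1}{24}\right)$ whenever $P_MLn\equiv k-1\ (\mathrm{mod}\ 24)$; in the remaining case $b_k$ is evaluated at a non-integer and vanishes, so the target congruence is automatic. Hence, by the Chinese Remainder Theorem, it suffices to force the $q^{(P_M/p_i)Ln}$-coefficient of $f_i$ to vanish modulo $p_i^{a_i}$ for every $i$.

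First I would record the Hecke mechanism. Suppose $Q$ is a prime coprime to $576kM$ with $f_i\,|\,T_Q\equiv 0\ (\mathrm{mod}\ p_i^{a_i})$, and suppose $Q$ divides the squarefree integer $L$ while $\gcd(n,L)=1$. Writing $(P_M/p_i)Ln=Q\cdot M'$ with $M'=(P_M/p_i)(L/Q)n$, squarefreeness of $L$ gives $\gcd(M',Q)=1$; since
$$f_i\,|\,T_Q=\sum_{N}\Big(a_i(QN)+\big(\tfrac{kp_i}{Q}\big)Q^{\lambda_i-1}a_i(N/Q)\Big)q^N$$
and $a_i(M'/Q)=0$, the $q^{M'}$-coefficient of $f_i\,|\,T_Q$ is $a_i(QM')=a_i\big((P_M/p_i)Ln\big)\equiv 0\ (\mathrm{mod}\ p_i^{a_i})$. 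The key point here is that a \emph{single} good prime factor of $L$ already forces the vanishing, so squarefreeness is used only to guarantee $Q\nmid L/Q$.

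The heart of the matter is to produce, with positive density, primes $Q$ for which $f_i\,|\,T_Q\equiv 0\ (\mathrm{mod}\ p_i^{a_i})$ holds \emph{simultaneously} for all $i$, together with $Q\equiv -1\ (\mathrm{mod}\ 576kM)$. Following Serre and Ono (\cite{Ono}, \cite{Scott com M}), I would decompose each $f_i$ over the finitely many normalized Hecke eigenforms $g$ occurring in its (possibly extended) eigenspace decomposition, attach to each such $g$ the Deligne representation $\rho_g$ reduced modulo the relevant power of the prime above $p_i$, and note that each image lies in $GL_2$ of a finite ring. Let $K$ be the compositum of $\mathbb{Q}(\zeta_{576kM})$ with the fixed fields of all these finitely many representations; then $K/\mathbb{Q}$ is a finite Galois extension. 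By the Chebotarev density theorem the primes $Q$ with $\mathrm{Frob}_Q$ lying in the class of complex conjugation $c$ in $\Gal(K/\mathbb{Q})$ have positive density. For such $Q$ one gets $Q\equiv -1\ (\mathrm{mod}\ 576kM)$ (from the action on roots of unity), while $a_g(Q)=\mathrm{tr}\,\rho_g(\mathrm{Frob}_Q)=\mathrm{tr}\,\rho_g(c)=0$ since these representations are odd; hence $g\,|\,T_Q=a_g(Q)g\equiv 0$ and therefore $f_i\,|\,T_Q\equiv 0\ (\mathrm{mod}\ p_i^{a_i})$ for every $i$. I expect this step to be the main obstacle: one must control the reductions of the Deligne representations uniformly, descend the vanishing from the coefficient rings of the eigenforms back to $\mathbb{Z}/p_i^{a_i}\mathbb{Z}$, and verify that complex conjugation both kills all traces and imposes $Q\equiv -1$.

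Finally I would assemble the density statement. Let $S$ be the positive-density set of primes $Q$ furnished above. Every squarefree $L\equiv \pm1\ (\mathrm{mod}\ 576kM)$ divisible by at least one $Q\in S$ then satisfies $b_k\!\left(\frac{P_MLn-k+1}{24}\right)\equiv 0\ (\mathrm{mod}\ M)$ for all $n$ with $\gcd(n,L)=1$, by the two preceding paragraphs and the Chinese Remainder Theorem. Because $S$ has positive density among the primes, $\sum_{Q\in S}1/Q$ diverges, so a standard sieve shows that a positive density of the squarefree integers in the progression $L\equiv \pm1\ (\mathrm{mod}\ 576kM)$ admit a prime factor in $S$. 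This gives the asserted positive density of admissible $L$ and completes the plan.
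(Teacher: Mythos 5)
Your overall strategy (prime-power factorization of $M$, a cusp form $f_i\equiv F_{k,p_i}\pmod{p_i^{a_i}}$ for each factor, the Hecke-operator mechanism forcing $a_i(QM')\equiv 0$ when $Q\mid L$ and $Q\nmid M'$, and CRT at the end) matches the paper's, and your Hecke computation is the same as the paper's display culminating in its congruence (\ref{Key}). Where you genuinely diverge is in how the good primes are distributed among the factors of $L$. You insist on a \emph{single} prime $Q$ with $f_i\mid T_Q\equiv 0\pmod{p_i^{a_i}}$ \emph{simultaneously for all $i$}, which forces you to reopen the proof of Serre's theorem (compositum of the fields cut out by the Deligne representations, Chebotarev at the class of complex conjugation); this is a correct strategy and the standard one in the literature, but it does not follow formally from Theorem \ref{Serre's theorem} as quoted (that statement is simultaneous only over forms in one fixed space $S_k(\Gamma_0(N),\chi)_m$, and your $f_i$ live in different spaces with different moduli $p_i^{a_i}$), and the step you yourself flag as the main obstacle --- reducing Deligne representations modulo prime \emph{powers} and descending from eigenforms back to $f_i$, where the Hecke action mod $p^a$ need not be semisimple --- is left as a sketch. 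The paper sidesteps this entirely: it applies Theorem \ref{Serre's theorem} separately to each $f^{k,m_i}$ to get sets $S_i$, fixes one prime $l_i\in S_i$ for each $i\le I-1$, and builds $L=l'\prod_{i=1}^{I-1}l_i$ with $l'$ ranging over $S_I$, so that each prime-power factor of $M$ is handled by its own prime divisor of $L$. What your route buys in exchange is a better final count: the paper's set of $L$'s has counting function $\asymp N/\log N$ (it is a translate of a set of primes), whereas your sieve over squarefree integers with at least one prime factor in $S$ produces a set of genuinely positive natural density, which matches the theorem's literal wording more closely. If you want to keep your single-prime architecture, you should either prove the simultaneous multi-space version of Serre's theorem carefully or cite it explicitly; otherwise the paper's ``one prime per prime power'' device lets you quote Theorem \ref{Serre's theorem} as a black box.
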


\begin{remark}
	By the above Theorem, one may find that for any positive integers $M$ coprime to $6$ there exists Ramanujan-type congruences modulo $M$.  In fact, it is only necessary to choose a special arithmetic progression modulo $24$ for $n$. For instance, in the proof of Theorem \ref{thm 1.3} in Section 3, one can control the arbitrary prime factor of $L$ coprime to $6$ by appropriate adjustments. Then replacing $n$ by $24nL+(k-1)P_{M}L+24$, we get $b_k ( P_ML^2n+P_ML+(k-1)\frac{{P_M}^2L^2-1}{24}  ) \equiv 0\; (\mathrm{mod}\;M).$ Naturally, we have the following corollary.
\end{remark}

\begin{corollary}
	If $M$ is a positive integer coprime to $6$ and $k\geqslant2$ is an arbitrary integer, then there exists infinitely many distinct arithmetic progressions $An + B$  with the property that 
	$$b_k(An+B)\equiv 0\;(\mathrm{mod}\;M)$$
	for every integer $n$.
\end{corollary}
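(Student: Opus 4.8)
The plan is to deduce the corollary directly from Theorem \ref{thm 1.3}, using the substitution indicated in the preceding remark to upgrade a congruence valid only for $n$ coprime to $L$ into one valid along a full arithmetic progression. First I would invoke Theorem \ref{thm 1.3} to produce a squarefree integer $L \equiv \pm 1 \;(\mathrm{mod}\;576kM)$ satisfying
$$b_k\left(\frac{P_M L n - k + 1}{24}\right) \equiv 0 \;(\mathrm{mod}\;M)$$
for all positive $n$ coprime to $L$, and, as the remark observes, I would arrange in addition that $L$ is coprime to $6$ (hence coprime to $24$) by controlling its prime factors. Since the family furnished by Theorem \ref{thm 1.3} has positive density, such $L$ exist in abundance.

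Next I would perform the change of variable $n \mapsto 24Ln + (k-1)P_M L + 24$. Expanding the index gives
$$\frac{P_M L\bigl(24 L n + (k-1)P_M L + 24\bigr) - k + 1}{24} = P_M L^2 n + P_M L + (k-1)\frac{P_M^2 L^2 - 1}{24},$$
so with $A = P_M L^2$ and $B = P_M L + (k-1)\tfrac{P_M^2 L^2 - 1}{24}$ the assertion reduces to $b_k(An + B) \equiv 0 \;(\mathrm{mod}\;M)$ for all $n$.

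Two points then require verification, and this is where the coprimality hypothesis of Theorem \ref{thm 1.3} is decisive. The first is that $B \in \Z$: since $M$ and $L$ are both coprime to $6$, the product $P_M L$ is coprime to $6$, so $(P_M L)^2 \equiv 1 \;(\mathrm{mod}\;24)$, giving $24 \mid P_M^2 L^2 - 1$ and hence $B \in \Z$. The second, and the genuine obstacle, is confirming that the substituted theorem-index is \emph{always} coprime to $L$, so that Theorem \ref{thm 1.3} applies with no exceptional residues: because $24 L n + (k-1)P_M L + 24 \equiv 24 \;(\mathrm{mod}\;L)$ and $\gcd(24, L) = 1$ by our choice of $L$, the index $24Ln + (k-1)P_M L + 24$ is coprime to $L$ for every $n$. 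For $n \geq 0$ this index is moreover a positive integer, so the congruence $b_k(An+B) \equiv 0 \;(\mathrm{mod}\;M)$ holds; for the remaining $n$ the index falls outside $\Z_{\geqslant 0}$ and both sides vanish by the conventions on $b_k$, so the congruence persists along the entire progression.

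Finally I would note that distinct admissible $L$ yield distinct leading coefficients $A = P_M L^2$, hence distinct progressions $An + B$; because Theorem \ref{thm 1.3} supplies a positive density, and therefore infinitely many, such $L$, this produces infinitely many distinct arithmetic progressions with $b_k(An + B) \equiv 0 \;(\mathrm{mod}\;M)$ for every integer $n$, completing the argument.
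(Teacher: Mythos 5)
Your argument is correct and follows essentially the same route as the paper, which establishes this corollary via the remark preceding it: invoke Theorem \ref{thm 1.3} and substitute $n \mapsto 24Ln + (k-1)P_M L + 24$ to obtain the progression $A = P_M L^2$, $B = P_M L + (k-1)\frac{P_M^2 L^2 - 1}{24}$. Your added verifications (integrality of $B$, coprimality of the substituted index to $L$, distinctness of the progressions for distinct $L$) are exactly the details the paper leaves implicit, and they check out; the only trivial imprecision is that for some negative $n$ the substituted index can still be positive, in which case Theorem \ref{thm 1.3} applies directly rather than the vanishing convention.
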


As an immediate corollary of the above fact, we have

\begin{corollary}\label{Rough estimate}
	If $M$ is a positive integer coprime to $6$ and $k\geqslant2$ is an arbitrary integer, then there are infinitely many positive integers $n$ with the property that 
	$$b_k(n)\equiv 0 \;(\mathrm{mod}\;M).$$
	More specifically,
	$$\# \left \{ 0\leqslant n\leqslant N\::\: b_k(n)\equiv 0\;(\mathrm{mod}\;M ) \right \} \gg N.$$
\end{corollary}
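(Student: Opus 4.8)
The plan is to deduce the linear lower bound directly from the arithmetic progressions produced by the preceding results, so that only an elementary counting argument remains. First I would invoke the Corollary immediately above, which guarantees at least one arithmetic progression $An+B$ with $b_k(An+B)\equiv 0\;(\mathrm{mod}\;M)$ for every integer $n$. Equivalently, one may extract such a progression directly from Theorem \ref{thm 1.3}: fixing a single squarefree $L\equiv\pm1\;(\mathrm{mod}\;576kM)$, which exists because the admissible set of $L$ has positive density and is therefore nonempty, the Remark's substitution $n\mapsto 24nL+(k-1)P_ML+24$ yields the explicit values $A=P_ML^2$ and $B=P_ML+(k-1)\frac{P_M^2L^2-1}{24}$, together with $b_k(An+B)\equiv 0\;(\mathrm{mod}\;M)$ for all $n\geqslant 0$. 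Here $B$ is a genuine nonnegative integer: since $P_M$ is coprime to $6$ and $L\equiv\pm1\;(\mathrm{mod}\;576)$, one has $\gcd(P_ML,24)=1$, and every integer coprime to $24$ squares to $1$ modulo $24$, so $P_M^2L^2\equiv 1\;(\mathrm{mod}\;24)$ and the quantity $\frac{P_M^2L^2-1}{24}$ is an integer.

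Once such a progression is in hand, the values $An+B$ for $0\leqslant n\leqslant (N-B)/A$ are $\lfloor (N-B)/A\rfloor +1$ distinct nonnegative integers, all lying in $[0,N]$ and all satisfying $b_k(An+B)\equiv 0\;(\mathrm{mod}\;M)$. Because $A$ and $B$ depend only on $M$, $k$ and the fixed choice of $L$, this count is $\gg N$, which simultaneously establishes the infinitude of the relevant $n$ and the quantitative estimate
$$\#\left\{0\leqslant n\leqslant N : b_k(n)\equiv 0\;(\mathrm{mod}\;M)\right\}\gg N.$$

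I do not anticipate any genuine obstacle, since all the arithmetic substance—modularity, the Hecke-operator argument, and the positive density of admissible $L$—is already packaged into Theorem \ref{thm 1.3} and its Corollary. The only remaining observation is that a single fixed-modulus arithmetic progression already accounts for a positive proportion of the integers up to $N$. The one point deserving a line of care is the verification that the progression consists of legitimate (nonnegative integer) arguments of $b_k$, which, as noted above, reduces to the elementary congruence $P_M^2L^2\equiv 1\;(\mathrm{mod}\;24)$.
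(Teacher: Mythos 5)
Your proposal is correct and matches the paper's intent exactly: the paper states this result as "an immediate corollary of the above fact" (the preceding Corollary on arithmetic progressions) and gives no further proof, and your argument simply makes the implicit counting explicit — a fixed progression $An+B$ with $b_k(An+B)\equiv 0\;(\mathrm{mod}\;M)$ contributes $\gg N/A \gg N$ values up to $N$. Your additional check that $\frac{P_M^2L^2-1}{24}$ is an integer is a reasonable (and correct) piece of due diligence that the paper leaves to the Remark.
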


\section{\textbf{Preliminaries}}
We expect that in this paper our proofs will be self-contained enough to achieve the goal of making it easier and more convenient for reading. Therefore, in the preliminary knowledge, we shall introduce background of modular forms sufficiently, including some theories abouot $Hecke$ $operator$, etc. To see more details, one can consult monographs \cite{GTM 97,GSM 179,R-Roy,GTM 228} or standard text \cite{milneMF,Notes}.

First we introduce some basic definitions for modular forms. The $upper$ $half$ $plane$, we denoting by $\mathcal{H}$, is one set:
$$\mathcal{H}=\{z \in \mathbb{C}, Im(\tau)>0\},$$
i.e., the complex numbers with positive imaginary pars.  Let $SL_2(\mathbb{Z})$ be the group of $2 \times 2$ integer matrices $\gamma = \begin{pmatrix}
	a &b \\
	c &d
\end{pmatrix}$
with $ab-cd=1$. For some $N\in \mathbb{Z}^{+}$, the congruence subgroups $\Gamma_0(N)$ is 
$$\Gamma_0(N):=\left \{ \begin{bmatrix}
	a&b \\
	c&d
\end{bmatrix}\in SL_2(\mathbb{Z}),\, \begin{bmatrix}
	a& b\\
	c &d
\end{bmatrix}\equiv \begin{bmatrix}
	* &* \\
	0 &*
\end{bmatrix}\;(\mathrm{mod} \;N )\right \}.$$

Let $k$ be an integer, then we denote by $M_k(\Gamma_0(N),\chi)$ (resp. $S_k(\Gamma_0(N),\chi)$) the usual space of holomorphic modular forms
(resp. cusp forms) of integral weight $k$ and Nebentypus
character $\chi$ with respect to the $\Gamma_0(N)$ where $\chi$ is a Dirchlet character modulo $N$.  In the case that $\lambda \in \frac{1}{2}\mathbb{Z}$ and $4\mid N$, we let  $M_\lambda(\Gamma_0(N),\chi)$ (resp. $S_\lambda(\Gamma_0(N),\chi)$) denote the corresponding holomorphic modular forms (resp. cusp forms) of half integer weight $\lambda$ similarly. Additionally, we let $\chi_d$ be the Kronecker character of $Q(\sqrt{d})$ for a squarefree integer $d$. 

Recall an instance of cusp form, the Dedekind's eta-function. It is defined by 
$$\eta(z):=q^{1/24}\prod_{n=1}^{\infty}(1-q^n)$$
 where $q=e^{2\pi i z}$. Thoughout this paper let  $q:=e^{2\pi i z}$. Furthermore, we know $\eta (24z)\in S_{\frac{1}{2} } (\Gamma (576), \chi _3)$. As one will see later in this paper, the above cusp forms will be used several times in the construction of modular forms we need. 

The $Hecke$ $operators$  which act on spaces of modular forms are natural linear transformations. We recall the action of $Hecke$ $operators$  on usual space of holomorphic modular forms of integral weight (This can be found in a number of standard texts such as \cite{Shi,Se,SLang}). 
\begin{definition}\label{def of hecke operator}
If $f(z)=\sum_{n=0}^{\infty } a(n)q^n\in M_k(\Gamma_0(N),\chi)$ and $p$ is a  prime, then the action of the $Hecke$ $operator$ $T_{p,k,\chi}$ on $f(z)$ is defined by
$$f(z)\mid T_{p,k,\chi} :=\sum_{n=0}^{\infty } (a(pn)+\chi(p)p^{k-1}a(n/p))q^n.$$
If $p\nmid n$, we let $a(n/p)=0$ for convince.
\end{definition}
\noindent For a specific space of modular forms $M_k(\Gamma_0(N),\chi)$, we usually abbreviate $T_{p,k,\chi}$ as $T_p$. Further, $T_p$ is a $\mathbb{C}-$vector space endomorphism. 

In this paper, for a prime $l$, we let $S_t(\Gamma_0(N),\chi)_l$  be the $\mathbb{F}_l$-vector space of reductions mod $l$ of the $q$-expansions of  forms in $S_t(\Gamma_0(N),\chi)$ with rational integer coefficients, here $t \in \mathbb{Z}$ or $\frac{1}{2}\mathbb{Z}$.  Then we present a Serre's powerful result with respect to $Hecke$ $operators$ (c.f. \cite[6.4]{Se operator}).
\begin{theorem}[J.-P. Serre]
	\label{Serre's theorem}
	The set of primes $l\equiv-1\;(\mathrm{mod}\;N)$ such that 
	$$f\ |\ T(l)\equiv0\; (\mathrm{mod}\; m)$$
	
	\noindent
	for each $f(z)\in S_k(\Gamma_0(N),\chi)_m$ has positive density, here $T(l)$ denotes the usual Hecke operator of index l acting on $S_k(\Gamma_0(N),\chi)$.
\end{theorem}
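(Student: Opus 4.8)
The plan is to obtain the positive density from the Chebotarev density theorem, after recasting both defining conditions on $l$ as conditions on the Frobenius class of $l$ in a single finite Galois extension of $\mathbb{Q}$. First I would fix the prime $m$ and observe that $V:=S_k(\Gamma_0(N),\chi)_m$ is finite-dimensional over $\mathbb{F}_m$ (extending scalars to a finite field $\mathbb{F}_q\supseteq\mathbb{F}_m$ if needed to split the Hecke action), and that the operators $T(l)$ with $l\nmid Nm$ form a commuting family in $\mathrm{End}(V)$. Hence they generate a commutative $\mathbb{F}_m$-algebra $\mathbb{T}\subseteq\mathrm{End}(V)$ which, being a subring of a matrix algebra over a finite field, is a \emph{finite} ring. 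The event ``$f\mid T(l)\equiv 0\pmod m$ for every $f\in V$'' is precisely the statement that $T(l)$ is the zero element of $\mathbb{T}$.

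Next I would package $l\mapsto T(l)\bmod m$ as Frobenius data. By the theory of Galois representations attached to the Hecke algebra (Deligne, Shimura, together with the pseudorepresentation formalism where the residual representation is reducible), there is a continuous two-dimensional representation, or pseudorepresentation, $\rho$ of $G_{\mathbb{Q}}=\mathrm{Gal}(\overline{\mathbb{Q}}/\mathbb{Q})$ with coefficients in the finite ring $\mathbb{T}$, unramified outside $Nm$, satisfying $\mathrm{tr}\,\rho(\mathrm{Frob}_l)=T(l)$ and $\det\rho=\chi\,\omega^{k-1}$, where $\omega$ is the mod-$m$ cyclotomic character. Since $\mathbb{T}$ is finite, $\rho$ factors through $\mathrm{Gal}(K/\mathbb{Q})$ for a finite Galois extension $K/\mathbb{Q}$ that we may take to contain $\mathbb{Q}(\zeta_N)$. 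Consequently $T(l)=\mathrm{tr}\,\rho(\mathrm{Frob}_l)$ depends only on the conjugacy class of $\mathrm{Frob}_l$, while $l\bmod N$ is recovered from the image of $\mathrm{Frob}_l$ in the cyclotomic quotient $\mathrm{Gal}(\mathbb{Q}(\zeta_N)/\mathbb{Q})\cong(\mathbb{Z}/N)^{\times}$. Both conditions ``$T(l)=0$'' and ``$l\equiv-1\pmod N$'' therefore define membership of $\mathrm{Frob}_l$ in a single conjugation-stable subset $C\subseteq\mathrm{Gal}(K/\mathbb{Q})$.

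Chebotarev's theorem then gives that the set of primes in the statement has density $\lvert C\rvert/\lvert\mathrm{Gal}(K/\mathbb{Q})\rvert$, so it remains to verify $C\neq\varnothing$. Here I would use complex conjugation $c\in\mathrm{Gal}(K/\mathbb{Q})$: it maps to $-1$ in $(\mathbb{Z}/N)^{\times}$, meeting the congruence condition, and the nonvanishing of the space forces the consistency relation $\chi(-1)=(-1)^{k}$, whence $\det\rho(c)=\chi(-1)(-1)^{k-1}=-1$. Because $\rho(c)$ is an involution of determinant $-1$ (and $m\geqslant 5$, so $2$ is invertible), the Cayley--Hamilton identity gives $\mathrm{tr}\,\rho(c)=0$, i.e. $T(l)=0$ for every $l$ whose Frobenius is conjugate to $c$. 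Thus $c\in C$ and the density is strictly positive.

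The step I expect to be the main obstacle is the middle one: genuinely realizing the automorphic datum $l\mapsto T(l)\bmod m$ as the trace of a Galois (pseudo)representation valued in the finite ring $\mathbb{T}$, so that ``$T(l)=0$ on all of $V$'' becomes an honest Frobenius condition to which Chebotarev applies. Once that identification is in hand, the congruence condition and the trace-zero computation for complex conjugation make the nonemptiness of $C$, and hence the positive density, essentially formal.
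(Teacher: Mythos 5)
A preliminary remark: the paper itself gives no proof of this statement --- it is quoted from Serre's \emph{Divisibilit\'e de certaines fonctions arithm\'etiques} --- so I am comparing your sketch against Serre's argument rather than an internal one. Your skeleton (finite mod-$m$ Hecke algebra $\mathbb{T}$, the function $l\mapsto T(l)\bmod m$ made Frobenian through Galois representations, Chebotarev, and complex conjugation as the witness class) is exactly the shape of the standard proof. The gap is in the mechanism by which you pass from ``$\mathrm{Frob}_l$ conjugate to $c$'' to ``$T(l)=0$ in $\mathbb{T}$''. Your Cayley--Hamilton step ($\rho(c)^2=1$, $\det\rho(c)=-1$, hence $\operatorname{tr}\rho(c)\,\rho(c)=0$, hence $\operatorname{tr}\rho(c)=0$ after multiplying by $\rho(c)^{-1}$) genuinely requires an honest representation $\rho\colon G_{\mathbb{Q}}\to\mathrm{GL}_2(\mathbb{T})$. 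Such a representation is available on the residually irreducible local components of $\mathbb{T}$, but on the Eisenstein (residually reducible) components one has in general only the pseudorepresentation you mention, and there the only available identity is $\operatorname{tr}(c)^2=\operatorname{tr}(c^2)+2\det(c)=2-2=0$. That makes $T(l)$ a square-zero element of $\mathbb{T}$, not the zero element. Since the Hecke action on $S_k(\Gamma_0(N),\chi)_m$ need not be semisimple ($\mathbb{T}$ need not be reduced), ``$T(l)$ nilpotent'' is strictly weaker than the assertion $f\mid T(l)\equiv 0$ for \emph{every} $f$, and the weaker conclusion would not suffice for the applications in this paper, which apply the theorem to one specific form.

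The repair --- and what Serre actually does --- is to work at the level of operators rather than traces, using the Eichler--Shimura/Deligne congruence relation on the mod-$m$ \'etale cohomology $W$ (a module on which the relevant Hecke algebra acts compatibly with its action on $S_k(\Gamma_0(N),\chi)_m$): there one has the identity of endomorphisms $T(l)=\rho(\mathrm{Frob}_l)+\chi(l)l^{k-1}\rho(\mathrm{Frob}_l)^{-1}$. For $l\equiv-1\pmod{Nm}$ one gets $\chi(l)l^{k-1}\equiv\chi(-1)(-1)^{k-1}=-1\pmod m$, and if $\rho(\mathrm{Frob}_l)$ is conjugate to the involution $\rho(c)$ then $\rho(\mathrm{Frob}_l)^{-1}=\rho(\mathrm{Frob}_l)$, so $T(l)=\rho(\mathrm{Frob}_l)-\rho(\mathrm{Frob}_l)=0$ as an operator, with no semisimplicity or reducedness hypothesis needed; Chebotarev applied to the class of $c$ then gives the positive density. (This route naturally produces primes $l\equiv-1\pmod{Nm}$, which is how Serre and Ono state the result; that is a sub-progression of the $l\equiv-1\pmod N$ asserted here, so positivity of the density is unaffected.) Your steps 1 and 3 are fine; it is precisely the middle step you flagged as the main obstacle that, as written, proves only $T(l)^2\equiv 0$.
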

The above result take a critical role in the proof of Thorem \ref*{thm 1.3}.  Next we introduce two other operators which act on the formal power series $\sum_{n\geqslant 0}a(n)q^n.$

Let $d_1$ and $d_2$ be two integers. Then define 
\begin{equation}\label{U operator}
	\sum_{n\geqslant 0}a(n)q^n  \mid U(d_1)=\sum_{n\geqslant 0}a(d_1n)q^n,
\end{equation}

\begin{equation}\label{V operator}
	\sum_{n\geqslant 0}a(n)q^n \mid V(d_2) =\sum_{n\geqslant 0}a(n)q^{{d_2}n} .
\end{equation}
Obviously, the operator $U(d_1)$ acts on series expansions also by
\begin{equation}\label{U OPERATOR}
	\sum_{n\geqslant 0}a(n)q^n \mid  U(d_1)=\sum_{\substack{n\geqslant 0 \\ n\equiv 0\;(\mathrm{mod}\;p) } }a(n)q^{\frac{n}{d_1} }.
\end{equation}
Additionally, the following lemma describes the behavior of these operators on half-integral weight modular forms (see \cite{Web UV operator }, Proposition 3.7).

\begin{lemma}\label{U V lemma}
	Suppose that $f(z)\in M_{\lambda + \frac{1}{2}} (\Gamma _0(4N),\chi ).$
	\begin{enumerate}
		\item If $d$ is a positive integer, then $f(z) \mid V(d) \in  M_{\lambda + \frac{1}{2}} (\Gamma _0(4Nd),(\frac{4d}{\boldsymbol{\cdot}}) \chi)$. Moreover, if $f(z)$ is a cusp form, then so is $f(z) \mid V (d)$.
		\item If $d\mid N$, then $f(z) \mid U(d) \in  M_{\lambda + \frac{1}{2}} (\Gamma _0(4N),(\frac{4d}{\boldsymbol{\cdot}}) \chi ).$ Moreover, if $f(z)$ is a cusp form, then so is $f(z) \mid U (d)$.
	\end{enumerate}
\end{lemma}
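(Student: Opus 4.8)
The plan is to realize both operators as finite linear combinations of half-integral-weight slash operators and then track the theta multiplier through an explicit coset permutation. Recall that a form $f\in M_{\lambda+\frac12}(\Gamma_0(4N),\chi)$ satisfies, for every $\gamma=\begin{pmatrix} a & b \\ c & e\end{pmatrix}\in\Gamma_0(4N)$,
\[
f(\gamma z)=\chi(e)\,\nu(\gamma)^{2\lambda+1}(cz+e)^{\lambda+\frac12}f(z),\qquad \nu(\gamma):=\left(\tfrac{c}{e}\right)\epsilon_e^{-1},
\]
where $\nu$ is the usual theta multiplier ($\epsilon_e=1$ or $i$ according as $e\equiv1$ or $3\pmod 4$). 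On $q$-expansions one has $f\mid V(d)=f(dz)$ and $f\mid U(d)=\frac1d\sum_{j=0}^{d-1}f\!\left(\frac{z+j}{d}\right)$, i.e. $V(d)$ is the slash by $\begin{pmatrix} d & 0\\0&1\end{pmatrix}$ and $U(d)$ is the averaged slash over the cosets $\begin{pmatrix}1&j\\0&d\end{pmatrix}$. The whole argument then rests on two facts about $\nu$: it is essentially multiplicative (it obeys the theta cocycle relation), and it is trivial on the unipotent matrices $\begin{pmatrix}1&*\\0&1\end{pmatrix}$ (since $\theta$ has period $1$).

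For part (1), fix $\gamma=\begin{pmatrix}a&b\\c&e\end{pmatrix}\in\Gamma_0(4Nd)$, so in particular $d\mid c$. The matrix identity
\[
\begin{pmatrix}d&0\\0&1\end{pmatrix}\begin{pmatrix}a&b\\c&e\end{pmatrix}=\begin{pmatrix}a&db\\c/d&e\end{pmatrix}\begin{pmatrix}d&0\\0&1\end{pmatrix},\qquad \gamma':=\begin{pmatrix}a&db\\c/d&e\end{pmatrix}\in\Gamma_0(4N),
\]
shows $d\cdot(\gamma z)=\gamma'(dz)$. Applying the transformation law of $f$ under $\gamma'$ at the point $dz$, the factor $\frac{c}{d}\,dz+e=cz+e$ reproduces $(cz+e)^{\lambda+\frac12}$ exactly, while the multiplier changes from $\nu(\gamma)=\left(\frac{c}{e}\right)\epsilon_e^{-1}$ to $\left(\frac{c/d}{e}\right)\epsilon_e^{-1}$. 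Since $\left(\frac{c}{e}\right)=\left(\frac{c/d}{e}\right)\left(\frac{d}{e}\right)$ and $\left(\frac{4d}{e}\right)=\left(\frac{d}{e}\right)$ for odd $e$, the extra factor is precisely $\left(\frac{4d}{e}\right)$ raised to the odd power $2\lambda+1$, i.e. $\left(\frac{4d}{e}\right)$. Hence $f\mid V(d)$ transforms with character $\left(\frac{4d}{\boldsymbol{\cdot}}\right)\chi$ on $\Gamma_0(4Nd)$, as claimed.

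For part (2), assume $d\mid N$ and fix $\gamma=\begin{pmatrix}a&b\\c&e\end{pmatrix}\in\Gamma_0(4N)$; now $4N\mid c$ forces $d\mid c$ and, crucially, $4d\mid c$. The identity $\begin{pmatrix}1&j\\0&d\end{pmatrix}\gamma=\gamma_j'\begin{pmatrix}1&j'\\0&d\end{pmatrix}$ with $\gamma_j'=\begin{pmatrix}a+jc&b'\\dc&e-cj'\end{pmatrix}\in\Gamma_0(4N)$ sets up a bijection $j\mapsto j'$ on $\mathbb{Z}/d\mathbb{Z}$ (solvable because $a$ is invertible mod $d$). Summing over $j$ and applying the transformation law for each $\gamma_j'$, the automorphy factor evaluates to $dc\cdot\frac{z+j'}{d}+(e-cj')=cz+e$ for every $j$, so $(cz+e)^{\lambda+\frac12}$ comes out of the sum. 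For the multiplier one computes $\nu(\gamma_j')=\left(\frac{dc}{e-cj'}\right)\epsilon_{e-cj'}^{-1}$; factoring off $\left(\frac{d}{e-cj'}\right)=\left(\frac{4d}{e-cj'}\right)=\left(\frac{4d}{e}\right)$ (using $4d\mid c$, so $e-cj'\equiv e\pmod{4d}$) leaves $\left(\frac{c}{e-cj'}\right)\epsilon_{e-cj'}^{-1}=\nu(\gamma)$, the last equality being the translation-invariance of $\nu$ under $\begin{pmatrix}1&-j'\\0&1\end{pmatrix}$. Thus $\nu(\gamma_j')=\left(\frac{4d}{e}\right)\nu(\gamma)$ and $\chi(e-cj')=\chi(e)$ are both independent of $j$; pulling them out and reindexing $\sum_j f(\frac{z+j'}{d})=\sum_{j'}f(\frac{z+j'}{d})$ yields the desired transformation with character $\left(\frac{4d}{\boldsymbol{\cdot}}\right)\chi$ on $\Gamma_0(4N)$.

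Finally, holomorphy and cuspidality at all cusps follow formally from the slash descriptions: for any $\beta\in SL_2(\mathbb{Z})$, both $V(d)f\mid\beta$ and $U(d)f\mid\beta$ are finite combinations of slashes $f\mid\alpha$ with $\alpha\in GL_2^+(\mathbb{Q})$, and each such slash is holomorphic at $\infty$ because $f$ is holomorphic at every cusp; if $f$ is a cusp form, each $f\mid\alpha$ vanishes at $\infty$, so $V(d)f$ and $U(d)f$ vanish at every cusp. I expect the genuine obstacle to be the multiplier bookkeeping in part (2): verifying that $\nu(\gamma_j')$ is independent of $j$ and splits off exactly the twist $\left(\frac{4d}{\boldsymbol{\cdot}}\right)$ requires the cocycle relation and the quadratic-reciprocity/Gauss-sum identities satisfied by the theta multiplier, and it is here that the hypothesis $d\mid N$ (hence $4d\mid c$) is essential.
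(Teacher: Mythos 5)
The paper gives no proof of this lemma at all --- it is quoted directly from Ono's \emph{Web of Modularity}, Proposition 3.7 (ultimately Shimura) --- so there is nothing internal to compare against. Your argument is correct and is essentially the standard proof found in those references: realize $V(d)$ and $U(d)$ as (sums of) slash operators, conjugate the coset representatives through $\Gamma_0$, and track the theta multiplier, with the hypothesis $d\mid N$ entering exactly where you say it does, namely to force $4d\mid c$ and hence the $j$-independence of $\chi(e-cj')$ and of $\left(\tfrac{4d}{e-cj'}\right)$.
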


\begin{remark} 
	Lemma \ref{U V lemma} (2) applies to all forms $f(z)$. Specially, if $d\nmid N$, then simply view $f(z)$ as an element of $M_{\lambda + \frac{1}{2}} (\Gamma _0(4dN),\chi ).$
\end{remark}

\noindent For an arbitrary integer $k\geqslant2$ and a prime number $m\geqslant5$,  define
\begin{equation}\label{def of fkm}
	f_{k,m} := \frac{\{\eta (24kz)\eta ^{m^2-1}(24z)\}\mid U(m)}{\eta^m (24z)}.
\end{equation}
We will show in the next section that the functions defined above  satisfy the conditions of Proposition \ref{j=0}.

Let  $j$ be a positive integer, $m\geqslant5$ be prime and $k\geqslant2$ be an arbitrary integer. Then we denote $f_{k,m,j} $ by 
\begin{equation}\label{def of FKMJ}
	f_{k,m,j} := \left \{ \frac{\eta(24kz) \eta^m(24mz) }{\eta(24z)} \right \}\mid U(m)\cdot \frac{\eta^{m^{j+1}-m}(24z)}{\eta^{m^j}(24mz)}.  
\end{equation}

\begin{proposition}\label{prop FKMJ congruence}
	Let $j$ be a positive integer, and let $m\geqslant5$ be prime. Then for an aibitrary integer $k\geqslant2$
		$$	f_{k,m,j} \equiv	F_{k,m}(z) \;(\mathrm{mod}\;m^{j+1}).$$
\end{proposition}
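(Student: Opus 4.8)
The plan is to reduce the claim to two elementary facts about $q$-series: an identity presenting $F_{k,m}$ as an eta-quotient hit by $U(m)$, and a congruence for powers of $\eta$ modulo powers of $m$. First I would compute the $q$-expansion of the relevant eta-quotient. Since $\eta(24kz)=q^{k}\prod_{n\geqslant1}(1-q^{24kn})$ and $\eta(24z)=q\prod_{n\geqslant1}(1-q^{24n})$, Euler's product for $b_k$ gives
$$\frac{\eta(24kz)}{\eta(24z)}=q^{k-1}\prod_{n\geqslant1}\frac{1-q^{24kn}}{1-q^{24n}}=\sum_{r\geqslant0}b_k(r)\,q^{24r+k-1}.$$
Applying $U(m)$ as in \eqref{U operator} extracts the coefficients of $q^{mn}$, which are nonzero exactly when $mn\equiv k-1\;(\mathrm{mod}\;24)$, and relabels them; this is precisely $F_{k,m}(z)$ from \eqref{FKM}. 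Hence $F_{k,m}=\bigl(\eta(24kz)/\eta(24z)\bigr)\mid U(m)$.

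Next I would use that $U(m)$ pulls out factors that are power series in $q^m$: comparing coefficients shows that if $B(q)=\widetilde B(q^m)$ then $(A\cdot B)\mid U(m)=(A\mid U(m))\cdot\widetilde B$. Because $\eta(24mz)=\eta(24z)\big|_{q\mapsto q^m}$, the factor $\eta^m(24mz)$ is such a series with $\widetilde B=\eta^m(24z)$, so
$$\left\{\frac{\eta(24kz)\,\eta^m(24mz)}{\eta(24z)}\right\}\mid U(m)=F_{k,m}\cdot\eta^m(24z).$$
Multiplying by the outer factor of \eqref{def of FKMJ} and collapsing the eta powers leaves $f_{k,m,j}=F_{k,m}\cdot\eta^{m^{j+1}}(24z)\big/\eta^{m^j}(24mz)$, so it remains to show that this last eta-ratio is $\equiv1\;(\mathrm{mod}\;m^{j+1})$.

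For that I would establish $\eta^{m^{j+1}}(24z)\equiv\eta^{m^j}(24mz)\;(\mathrm{mod}\;m^{j+1})$. Starting from $(1-x)^m\equiv1-x^m\;(\mathrm{mod}\;m)$ and lifting the exponent (if $A\equiv B\;(\mathrm{mod}\;m)$ then $A^{m^{a-1}}\equiv B^{m^{a-1}}\;(\mathrm{mod}\;m^{a})$), taking $A=(1-q^{24n})^m$, $B=1-q^{24mn}$ with $a=j+1$ and multiplying over $n$ gives $\prod(1-q^{24n})^{m^{j+1}}\equiv\prod(1-q^{24mn})^{m^j}\;(\mathrm{mod}\;m^{j+1})$; since the leading $q$-powers also agree (both equal $q^{m^{j+1}}$), this is the desired congruence of eta powers. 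Both series have unit constant term after removing the common power of $q$, so dividing by the integral invertible series $\eta^{m^j}(24mz)$ preserves the congruence and forces the ratio to equal $1$ modulo $m^{j+1}$. Combining the three displays yields $f_{k,m,j}\equiv F_{k,m}\;(\mathrm{mod}\;m^{j+1})$.

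The main obstacle is making the division step rigorous: one must check that a congruence $P\equiv Q\;(\mathrm{mod}\;m^{j+1})$ between integral power series with unit constant term implies $P/Q\equiv1\;(\mathrm{mod}\;m^{j+1})$, which uses that $Q^{-1}$ again has integral coefficients. The lifting-the-exponent induction is the other place demanding care; the rest is bookkeeping of $q$-expansions.
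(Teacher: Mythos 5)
Your proposal is correct and follows essentially the same route as the paper: both reduce the claim to the identity $\bigl\{\eta(24kz)\eta^m(24mz)/\eta(24z)\bigr\}\mid U(m)=F_{k,m}\cdot\eta^m(24z)$ (obtained by expanding the eta-quotient via Euler's generating function and letting the power series in $q^m$ pass through $U(m)$), and then to the congruence $\eta^{m^{j+1}}(24z)/\eta^{m^j}(24mz)\equiv1\pmod{m^{j+1}}$, which the paper proves as its Lemma \ref{congruence} by the same lifting-the-exponent induction you describe.
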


\begin{proposition}\label{prop FKMJ cusp}
	Let $j$ be a positive integer, and let $m\geqslant5$ be prime. Then for an aibitrary integer $k\geqslant2$
	$$f_{k,m,j} \in S_{\frac{m^{j+1}-m^j}{2} }( \Gamma _0(576km), ( \frac{km}{\boldsymbol{\cdot} }))$$
\end{proposition}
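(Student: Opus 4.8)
The plan is to factor the form as a product whose two factors can be analyzed separately. Set $g:=\frac{\eta(24kz)\eta^m(24mz)}{\eta(24z)}$ and $h:=\frac{\eta^{m^{j+1}-m}(24z)}{\eta^{m^j}(24mz)}$, so that $f_{k,m,j}=(g\mid U(m))\cdot h$. The weights add up correctly: $g$ has weight $\tfrac12(1+m-1)=\tfrac m2$, which $U(m)$ preserves, while $h$ has weight $\tfrac12\big((m^{j+1}-m)-m^j\big)$, and $\tfrac m2+\tfrac{m^{j+1}-m-m^j}{2}=\tfrac{m^{j+1}-m^j}{2}$ is the asserted weight. First I would show that $g$ is a cusp form of weight $\tfrac m2$ on $\Gamma_0(576km)$ and that $h$ is a weakly holomorphic eta-quotient on the same group, then pass $g$ through $U(m)$ using Lemma \ref{U V lemma}, and finally multiply by $h$ and control the cusps.

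For the modular transformation I would apply the standard criterion for eta-quotients (Gordon--Hughes--Newman and Ligozat). Both $g$ and $h$ are of the shape $\prod_\delta\eta(\delta z)^{r_\delta}$ with $\delta\in\{24,24k,24m\}$, each dividing $576km$. One checks the two divisibilities $\sum_\delta\delta r_\delta\equiv0\pmod{24}$ and $\sum_\delta\frac{576km}{\delta}r_\delta\equiv0\pmod{24}$: for $g$ the two sums are $24(k+m^2-1)$ and $24m$, and for $h$ they are $-24m$ and a multiple of $24$. Hence $g$ and $h$ transform on $\Gamma_0(576km)$ with the stated weights and with Nebentypus a Kronecker character built from $s=\prod_\delta\delta^{r_\delta}$. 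Writing the level as $4\cdot144km$ with $m\mid144km$, Lemma \ref{U V lemma}(2) then puts $g\mid U(m)$ on $\Gamma_0(576km)$ in weight $\tfrac m2$, with the character of $g$ twisted by $\left(\frac{4m}{\boldsymbol\cdot}\right)$; the product with $h$ adds weights and multiplies characters.

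To identify the Nebentypus as $\left(\frac{km}{\boldsymbol\cdot}\right)$ I would track the characters through the construction, starting from $\eta(24z)\in S_{1/2}(\Gamma_0(576),\chi_3)$ and using $V(m)$, $U(m)$ (Lemma \ref{U V lemma}) together with the eta-quotient character formula; the accumulated symbol collapses via $\left(\frac{4m}{\boldsymbol\cdot}\right)=\left(\frac{m}{\boldsymbol\cdot}\right)$, since $4$ is an odd square. As a consistency check, Proposition \ref{prop FKMJ congruence} gives $f_{k,m,j}\equiv F_{k,m}\pmod{m^{j+1}}$, and one computes directly that $F_{k,m}=\left(\frac{\eta(24kz)}{\eta(24z)}\right)\mid U(m)$, whose character is $\left(\frac{k}{\boldsymbol\cdot}\right)$ twisted by $\left(\frac{4m}{\boldsymbol\cdot}\right)$, i.e.\ exactly $\left(\frac{km}{\boldsymbol\cdot}\right)$.

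The main obstacle is holomorphy and cuspidality at every cusp of $\Gamma_0(576km)$. I would compute $\mathrm{ord}_{c/d}(g)$ for each $d\mid576km$ from the Ligozat formula $\frac{576km}{24}\sum_\delta\frac{\gcd(d,\delta)^2 r_\delta}{\gcd(d,576km/d)\,d\,\delta}$ and check it is strictly positive (at $d=1$ and $d=576km$ one finds $m$ and $k+m^2-1$), so that $g$ is cuspidal; then $g\mid U(m)$ is again a cusp form by Lemma \ref{U V lemma}(2). The delicate point is quantitative rather than qualitative: $h$ has genuine poles at some cusps (order $-m$ at $\infty$, for instance), so cuspidality of the product requires the vanishing of $g\mid U(m)$ to strictly dominate the poles of $h$ at each cusp, not merely to be positive. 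At $\infty$ this is clear, since $g$ vanishes to order $k+m^2-1$ and hence $g\mid U(m)$ to order $\lceil(k+m^2-1)/m\rceil\ge m+1>m$. At the remaining cusps the order of $g\mid U(m)$ cannot be read off the $q$-expansion because $U(m)$ permutes and rescales cusps, so I would bound it using the standard description of the action of $U(m)$ on cusps (equivalently, by applying the order formula to $g$ at the cusps lying over a given cusp of $\Gamma_0(576km)$) and verify, case by case over the divisors $d\mid576km$, that $\mathrm{ord}_{c/d}(g\mid U(m))+\mathrm{ord}_{c/d}(h)>0$. This cusp-by-cusp domination is where essentially all the work lies.
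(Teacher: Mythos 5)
Your bookkeeping is correct — the weights add to $\frac{m^{j+1}-m^j}{2}$, the two Ligozat divisibility conditions hold for both $g$ and $h$, the character collapses to $(\frac{km}{\boldsymbol{\cdot}})$ (though note $(\frac{4}{\boldsymbol{\cdot}})$ is trivial on odd integers because $4$ is an even perfect square, not an ``odd square''), and your order computations at $d=1$ and $d=576km$ are right. But the proposal has a genuine gap at exactly the point you flag: since $h=\eta^{m^{j+1}-m}(24z)/\eta^{m^j}(24mz)$ has strictly negative order at every cusp $c/d$ with $m\mid d$, cuspidality of $(g\mid U(m))\cdot h$ requires $\mathrm{ord}_{c/d}(g\mid U(m))$ to strictly dominate these poles, and this inequality is only announced, never established. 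It is not a routine check: Lemma \ref{U V lemma}(2) gives only nonnegativity of the orders of $g\mid U(m)$; the action of $U(m)$ mixes the cusps lying over $c/d$ and divides orders by factors of $m$; and the pole orders of $h$ at cusps with $m\mid d$ scale with $m$ and with $\gcd(d,24)^2$. This domination \emph{is} the content of the proposition, so deferring it leaves the proof incomplete rather than merely terse.

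For comparison, the paper sidesteps this computation by a different factorization: it pulls $\eta(24kz)\mid U(m)$ out front (a form in $S_{\frac12}(\Gamma_0(576km),(\frac{km}{\boldsymbol{\cdot}})\chi_3)$ by Lemma \ref{U V lemma}) and identifies the remaining factor $\bigl\{\frac{\eta^m(mz)}{\eta(z)}\mid U(m)\bigr\}\mid V(24)\cdot h$ with a rescaling of the form studied by Ahlgren for $p(n)$ modulo composite $M$, whose cuspidality is exactly \cite[Proposition 2]{Scott com M}. So the paper imports the hard cusp analysis from the literature, while your route would require reproving it from scratch. To complete your argument you must either carry out the cusp-by-cusp domination explicitly (in effect reproving Ahlgren's proposition with the extra $\eta(24kz)$ factor along for the ride), or switch to the paper's factorization and cite Ahlgren — in which case be aware that separating $\eta(24kz)$ from the rest of the product \emph{before} applying $U(m)$ needs justification, since $U(m)$ is not multiplicative (it only pulls through factors that are power series in $q^m$, such as $\eta^m(24mz)$).
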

\noindent It is obvious that Theorem \ref{consturction} immediately follows from the two propositions above.

\section{\textbf{Proof of the results}}
Before formally starting our proof of the main results, we proceed with the proof of two Lemmas. They will be used for the construction of the modular form and for the illustration of the congruence properties in this paper.

\begin{lemma}\label{communicative}
	If $d_1$ and $d_2$ are two positive integers of coprime, then for a formal power series $\sum_{n\geqslant 0}a(n)q^n$ we have 
	$$\left \{ \sum_{n\geqslant 0}a(n)q^n \mid U(d_1) \right \} \mid V(d_2)=\left \{ \sum_{n\geqslant 0}a(n)q^n \mid V(d_2) \right \} \mid U(d_1)$$
	where $U$ and $V$ are the operators defined as in $(\ref{U operator})$ and $(\ref{V operator})$ respectively. 
\end{lemma}

\begin{proof}We deduce from \ref{U OPERATOR} and \ref{V operator} that 
   $$\left \{ \sum_{n\geqslant 0}a(n)q^n \mid U(d_1) \right \} \mid V(d_2)= \sum_{\substack{n\geqslant 0 \\ n\equiv 0\;(\mathrm{mod}\;d_1) } }a(n)q^{\frac{n d_2}{d_1  }}=\sum_{\substack{n\geqslant 0 \\ nd_2\equiv 0\;(\mathrm{mod}\;d_1) } }a(n)q^{\frac{n d_2}{d_1  }}$$
   
   $$=\left \{ \sum_{n\geqslant 0}a(n)q^n \mid V(d_2) \right \} \mid U(d_1).$$
\end{proof}

\begin{lemma}\label{congruence}
	Suppose that $n_1, n_2$ are two positive integers with ${n_2 \geqslant2}$. Then for each positive integer $i$ we have,
	$$\frac{\eta^{n^i_2}(n_1z)}{\eta^{n^{i-1}_2}(n_1n_2z)} \equiv 1\;(\mathrm{mod} \;n^i_2).$$
\end{lemma}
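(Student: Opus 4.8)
The plan is to strip away the $\eta$-function and reduce the assertion to a congruence between infinite products, and then to prove that congruence by induction on $i$. First I would substitute $\eta(z)=q^{1/24}\prod_{n\geqslant1}(1-q^n)$. Setting $Q=q^{n_1}$ and $P(Q)=\prod_{n\geqslant1}(1-Q^n)$, the fractional powers of $q$ contribute $q^{n_1n_2^i/24}$ to the numerator and $q^{(n_1n_2)n_2^{i-1}/24}=q^{n_1n_2^i/24}$ to the denominator, so they cancel identically. Thus the quotient in the lemma is an honest power series in $Q$ with constant term $1$, namely
$$\frac{\eta^{n_2^i}(n_1z)}{\eta^{n_2^{i-1}}(n_1n_2z)}=\frac{P(Q)^{n_2^i}}{P(Q^{n_2})^{n_2^{i-1}}},$$
and it remains to show that this equals $1$ modulo $n_2^i$ for every $i\geqslant1$.

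The structural point that makes the induction clean is that the quotient attached to $i+1$ is exactly the $n_2$-th power of the one attached to $i$: if $A_i:=P(Q)^{n_2^i}/P(Q^{n_2})^{n_2^{i-1}}$, then $A_i^{n_2}=P(Q)^{n_2^{i+1}}/P(Q^{n_2})^{n_2^i}=A_{i+1}$. Each $A_i$ has integer coefficients (both $P(Q)$ and $P(Q^{n_2})$ have constant term $1$), so I can invoke the elementary lifting principle: if $A_i\equiv1\pmod{n_2^s}$ then $A_i^{n_2}\equiv1\pmod{n_2^{s+1}}$. Writing $A_i=1+n_2^sB$ with $B\in\mathbb{Z}[[Q]]$ and expanding,
$$A_i^{n_2}=1+n_2^{s+1}B+\sum_{j\geqslant2}\binom{n_2}{j}n_2^{js}B^j;$$
the middle term is divisible by $n_2^{s+1}$, and for $j\geqslant2$ one has $js\geqslant2s\geqslant s+1$ because $s\geqslant1$, so the remaining terms are divisible by $n_2^{s+1}$ as well. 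I would stress that this step is uniform in $n_2$ and uses no primality whatsoever: the powers of $n_2$ alone, rather than any divisibility of the binomial coefficients, drive the congruence. Consequently the full lemma for all $i$ follows from the single base case $i=1$.

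The base case $P(Q)^{n_2}\equiv P(Q^{n_2})\pmod{n_2}$ is where I expect the real difficulty to lie. Since each coefficient of these products involves only finitely many factors, it suffices to establish the factorwise Frobenius-type congruence $(1-x)^{n_2}\equiv1-x^{n_2}\pmod{n_2}$ and then take the product over $n\geqslant1$ with $x=Q^n$. This is the one step in which the fine arithmetic of $n_2$ intervenes, through the divisibility of the intermediate binomial coefficients $\binom{n_2}{j}$ for $1\leqslant j\leqslant n_2-1$, and so securing it is the heart of the argument. Once the base case is in place, the lifting principle of the previous paragraph propagates it verbatim to every $i$, yielding the stated congruence modulo $n_2^i$.
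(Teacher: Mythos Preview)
Your outline is exactly the paper's argument: strip the $q^{1/24}$ factors, reduce to a congruence of infinite products, prove the base case $i=1$ via $(1-x)^{n_2}\equiv 1-x^{n_2}\pmod{n_2}$, and then push from $i$ to $i+1$ by raising to the $n_2$-th power. Your inductive step is in fact tidier than the paper's: you use only $js\geqslant s+1$ for $j\geqslant 2$, whereas the paper also invokes $n_2\mid\binom{n_2}{j}$ for $1\leqslant j\leqslant n_2-1$ at that stage.

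There is, however, a genuine gap in the base case, and it is the same gap that is present in the paper's own proof: the congruence $(1-x)^{n_2}\equiv 1-x^{n_2}\pmod{n_2}$, equivalently $n_2\mid\binom{n_2}{j}$ for all $1\leqslant j\leqslant n_2-1$, holds precisely when $n_2$ is prime. For composite $n_2$ it fails (e.g.\ $n_2=4$: $\binom{4}{2}=6\not\equiv 0\pmod 4$), and the lemma itself is then false already at $i=1$; with $n_2=4$ one computes $\eta^4(n_1z)/\eta(4n_1z)=1+2q^{2n_1}+\cdots\not\equiv 1\pmod 4$. So your remark that ``securing'' the binomial divisibility is ``the heart of the argument'' is exactly right, but it cannot be secured in the stated generality. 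You should add the hypothesis that $n_2$ is prime; this is all the paper ever uses (it applies the lemma only with $n_2=m\geqslant 5$ prime), and with that hypothesis both your proof and the paper's go through verbatim.
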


\begin{proof}
 Since $(1-X^{n_1})^{n_2}=(1-X)^{n_1 n_2}\;(\mathrm{mod}\;n_2)$, it follows immediately that 
 $$\frac{\eta^{n_2}(n_1z)}{\eta(n_1n_2z)}=\prod_{n=1}^{\infty } \frac{(1-q^{24n_1})^{n_2}}{(1-q^{24n_1n_2})}  \equiv 1\;(\mathrm{mod} \;n_2).$$
 Assume that the congruence relation in the Lemma holds in the case $i = k - 1, k\geqslant2$ is an integer, i.e., 
 $$\frac{\eta^{n^{k-1}_2}(n_1z)}{\eta^{n^{k-2}_2}(n_1n_2z)} =1+n^{k-1}_2g(z),$$
 where $g(z)$ is a power series with all integers as coefficients. Therefore 
  $$\frac{\eta^{n^{k}_2}(n_1z)}{\eta^{n^{k-1}_2}(n_1n_2z)}=[1+n^{k-1}_2g(z)]^{n_2}=1+\sum_{j=1}^{{n_2}-1}\binom{n_2}{j} [n^{k-1}_2g(z)]^j+[n^{k-1}_2g(z)]^{n_2}.$$
  Note that $n_2 \mid \binom{n_2}{j}   $ with $1 \leqslant j \leqslant {n_2} -1$, and $(k-1)n_2 \geqslant k$. Then it follows from Mathematical Induction that Lemma holds.
\end{proof}

\begin{proof}[Proof of Proposition \ref{prop FKMJ congruence}]
First recall  the definition of $F_{k,m}(z)$ in (\ref{FKM}) and $f_{k,m,j}(z)$ in (\ref{def of FKMJ}).  Combining Lemma \ref{congruence} we find that it is sufficient for us to verify that
\begin{equation}\label{suffice to prove}
	\left \{ \frac{\eta(24kz) \eta^m(24mz) }{\eta(24z)} \right \}\mid U(m)= \sum_{\substack{n\geqslant 0 \\ mn\equiv k-1   \;(\mathrm{mod}\;24)}} b_k\left ( \frac{mn-k+1}{24}  \right ) q^n \cdot \eta^m(24z)
\end{equation}

Note that the generating functions for $b_k(n)$ are 
$$	\sum_{n=0}^{\infty}b_k(n)q^n=\prod_{n=1}^{\infty}\frac{1-q^{kn}}{1-q^n},$$
therefore 
\begin{equation}\nonumber
	\begin{aligned}
		\frac{\eta(24kz) \eta^m(24mz) }{\eta(24z)} &= q^{k-1}\frac{\prod_{n=1}^{\infty}(1-q^{24kn})}{\prod_{n=1}^{\infty}(1-q^{24n})}\cdot q^{m^2}\cdot \prod_{n=1}^{\infty}(1-q^{24mn})^m \\
		&=\sum_{n=0}^{\infty}b_k(n)q^{24n+k-1}\cdot q^{m^2}\cdot \prod_{n=1}^{\infty}(1-q^{24mn})^m.
	\end{aligned}
\end{equation}
Thus the action of $U$ operator (\ref{U OPERATOR}) shows that

\begin{equation}\nonumber
	\begin{aligned}
	\left \{ \frac{\eta(24kz) \eta^m(24mz) }{\eta(24z)} \right \}\mid U(m)&= \sum_{\substack{n\geqslant 0\\24n\equiv 1-k\;(\mathrm{mod}\;m )} }^{\infty }b_k(n) q^{\frac{24n+k-1}{m} }\cdot q^m \cdot \prod_{n=1}^{\infty}(1-q^{24n})^m \\
	&=\sum_{\substack{n\geqslant 0\\24n\equiv 1-k\;(\mathrm{mod}\;m )} }^{\infty }b_k(n) q^{\frac{24n+k-1}{m}}\cdot\eta ^m(24z).
		\end{aligned}
\end{equation}
Using the fact 
$$\sum_{\substack{n\geqslant 0\\24n\equiv 1-k\;(\mathrm{mod}\;m )} }^{\infty }b_k(n) q^{\frac{24n+k-1}{m}}=\sum_{\substack{n\geqslant 0\\mn\equiv k-1\;(\mathrm{mod}\;24 )} }^{\infty }b_k(\frac{mn-k
	+1}{24} ) q^n,$$
we complete the verification of (\ref*{suffice to prove}), meaning  the end of the proof of Proposition \ref*{prop FKMJ congruence}.
\end{proof}

\begin{proof}[Proof of Proposition \ref{prop FKMJ cusp}]
First we shall illustrate that which space of modular forms $\eta^m(24kz)\mid U(m)$ belongs to. 

Recall that $\eta (24z)\in S_{\frac{1}{2} } (\Gamma (576), \chi _3)$. Therefore by Lemma \ref{U V lemma}, we know that 
$$\eta(24kz)=\eta(24z) \mid V(k) \in  S_{\frac{1}{2} } (\Gamma (576k), (\frac{k}{\boldsymbol\cdot } )\chi _3),$$
and 
\begin{equation}\label{eta in which modular forms}
	\eta(24kz)\mid U(m) \in  S_{\frac{1}{2} } (\Gamma (576km), (\frac{km}{\boldsymbol\cdot } )\chi _3)
\end{equation}

We deduce from the definition \ref{def of FKMJ} and Lemma \ref{communicative} that  
\begin{equation}\nonumber
	\begin{aligned}
	f_{k,m,j} &= \left \{ \frac{\eta(24kz) \eta^m(24mz) }{\eta(24z)} \right \}\mid U(m)\cdot \frac{\eta^{m^{j+1}-m}(24z)}{\eta^{m^j}(24mz)}\\
	&=\eta(24kz)\mid U(m)\cdot \left \{ \frac{\eta^m(mz)}{\eta(z)}\mid V(24)  \right \} \mid  U(m)\cdot \frac{\eta^{m^{j+1}-m}(24z)}{\eta^{m^j}(24mz)}\\	
	&=\eta(24kz)\mid U(m)\cdot \left \{ \frac{\eta^m(mz)}{\eta(z)}\mid U(m)  \right \} \mid  V(24)\cdot \frac{\eta^{m^{j+1}-m}(24z)}{\eta^{m^j}(24mz)}.
	\end{aligned}	
\end{equation}
By a fact on power series \cite[Proposition 2]{Scott com M}, we see that 
\begin{equation}\label{shenlaizhibi}
	\left \{ \frac{\eta^m(mz)}{\eta(z)}\mid U(m)  \right \} \mid  V(24)\cdot \frac{\eta^{m^{j+1}-m}(24z)}{\eta^{m^j}(24mz)}\in S_{\frac{m^{j+1}-m^j-1}{2} }( \Gamma _0(576m), \chi _3).
\end{equation}
Putting (\ref{eta in which modular forms}) and (\ref{shenlaizhibi}) together, one completes the proof of Proposition \ref{congruence}.
\end{proof}

Next we proceed to the proof of Proposition \ref*{j=0}, that is the case when Theorem \ref*{consturction} holds for $j=0$. The proof uses a technique similar to that of Theorem \ref*{consturction}, as follows

\begin{proof}[Proof of Proposition \ref{j=0}]
 We start by describing the space of modular forms that $f_{k,m}(z)$ belongs to.
 
 \noindent From the definition of $f_{k,m}$ (\ref{def of fkm}) and Lemma \ref{U V lemma}, we know
 \begin{equation}\nonumber
 	\begin{aligned}
 		f_{k,m}&= \eta (24kz) \mid U(m)\cdot\frac{\eta ^{m^2-1}(24z)\mid U(m)}{\eta^m (24z)}  \\
 		&=\eta (24kz) \mid U(m)\cdot\frac{(\eta ^{m^2-1}(z))\mid U(m)\mid V(24)}{\eta^m (24z)}.
 	\end{aligned}
 \end{equation}
One can see in \cite[Theorem 8]{Ono} that $$\eta (24kz) \mid U(m)\cdot\frac{(\eta ^{m^2-1}(z))\mid U(m)\mid V(24)}{\eta^m (24z)} \in S_{\frac{m^2-m-1}{2} }(\Gamma _0(576m),\chi _3).$$
Note that we have illustrated that $\eta(24kz)\mid U(m) \in  S_{\frac{1}{2} } (\Gamma (576km), (\frac{km}{\boldsymbol\cdot } )\chi _3)$ in the proof of Proposition \ref*{prop FKMJ congruence}, therefore combining the above fact we get
$$f_{k,m} \in S_{\frac{m^{2}-m}{2} }( \Gamma _0(576km), ( \frac{km}{\boldsymbol{\cdot} })).$$

Next we proceed to prove the congruence relation for $f_{k,m}$.

\noindent It is easy to see that
\begin{equation}\nonumber
	\begin{aligned}
		 \frac{\eta (24kz)\eta ^{m}(24mz)}{\eta (24z)} &=q^{k-1}\cdot \frac{\prod_{n=1}^{\infty } (1-q^{24kn})}{\prod_{n=1}^{\infty } (1-q^{24n})}\cdot q^{m^2}\cdot \prod_{n=1}^{\infty }(1-q^{24mn})^{m}  \\
		&=\sum_{n=0}^{\infty }b_k(n)q^{24n+k-1}\cdot q^{m^2}\cdot \prod_{n=1}^{\infty }(1-q^{24mn})^{m}.
	\end{aligned}
\end{equation}
Thus we get 
\begin{equation}\nonumber
\begin{aligned}
	 \frac{\eta (24kz)\eta ^{m}(24mz)}{\eta (24z)} \mid U(m)&=\sum_{\substack{n\geqslant0\\24n \equiv 1-k\;(\mathrm{mod}\;m )} }^{\infty }b_k(n)q^{\frac{24n+k-1}{m} }\eta ^{m}(24z) \\
	 &=\sum_{\substack{n\geqslant 0 \\ mn\equiv k-1   \;(\mathrm{mod}\;24)}} b_k\left ( \frac{mn-k+1}{24}  \right ) q^n\cdot \eta ^{m}(24z).
\end{aligned}
\end{equation}\label{a(n)}
Since $(1-X^m)^m\equiv (1-X)^{m^2}\;(\mathrm{mod}\;m )$, we find that $$\frac{\{\eta (24kz)\eta ^{m^2-1}(24z)\}\mid U(m)}{\eta^m (24z)}\equiv \sum_{\substack{n\geqslant 0 \\ mn\equiv k-1   \;(\mathrm{mod}\;24)}} b_k\left ( \frac{mn-k+1}{24}  \right ) q^n \;(\mathrm{mod}\; m).$$
\end{proof}

Next we proceed to the proof of Theorem \ref*{boring} which follows \cite{lovejoy} . First we introduce the following theorem due to Serre \cite{Se operator}. 
\begin{theorem}[Serre]\label{sea(n)}
	Supose that $F(z):=\sum_{n=1}^{\infty}a(n)q^n$ is an integer weight cusp form with coefficients in $\mathbb{Z}$. If $M$ is a positive integer, then there exists a set of primes $S_M$  of positive density with the property that 
	$$a(nl^r)\equiv (r+1)a(n) \;(\mathrm{mod}\;M)$$
	whenever $l \in S_M$, $r$ is a positive integer, and $n$ is coprime to $l$. 
\end{theorem}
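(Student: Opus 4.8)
The plan is to exhibit the density set $S_M$ explicitly as the set of primes at which the Hecke operator $T_l$ acts on $F$ modulo $M$ as multiplication by $2$, while simultaneously $\chi(l)l^{k-1}\equiv 1\pmod M$. First I would fix the weight $k$, level $N$ and character $\chi$ with $F\in S_k(\Gamma_0(N),\chi)$ (a general integer-weight cusp form decomposes into finitely many such nebentypus components, and the good-prime sets below can be intersected). Recall from Definition \ref{def of hecke operator} that the coefficients of $F\mid T_l$ are $a(ln)+\chi(l)l^{k-1}a(n/l)$. Thus once $l$ is chosen with $F\mid T_l\equiv 2F\pmod M$ and $\chi(l)l^{k-1}\equiv 1\pmod M$, comparing coefficients of $q^{n}$ for $l\nmid n$ (so that $a(n/l)=0$) gives $a(ln)\equiv 2a(n)\pmod M$, and comparing coefficients of $q^{nl^{r}}$ for $r\ge 1$ gives the recursion $a(nl^{r+1})+a(nl^{r-1})\equiv 2a(nl^{r})\pmod M$.

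Second, writing $b_r:=a(nl^r)$, this recursion reads $b_{r+1}-b_r\equiv b_r-b_{r-1}\pmod M$, so the successive differences are constant modulo $M$. Since $b_0=a(n)$ and $b_1\equiv 2a(n)$, the common difference is $a(n)$, and an immediate induction yields $b_r\equiv (r+1)a(n)\pmod M$, which is exactly the assertion of the theorem for every $n$ coprime to $l$. This half of the argument is purely formal.

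The substantive step is to show that this set of $l$ has positive density. Here I would invoke the Galois representation attached to the Hecke algebra $\mathbb{T}=\mathbb{Z}[T_l]$ acting on $S_k(\Gamma_0(N),\chi)$: by Deligne's construction (applied to the finitely many eigensystems occurring in $F$ and assembled) there is a continuous representation $\rho\colon \mathrm{Gal}(\overline{\mathbb{Q}}/\mathbb{Q})\to GL_2(\mathbb{T}/M\mathbb{T})$, unramified outside $NM$, with $\mathrm{tr}\,\rho(\mathrm{Frob}_l)\equiv T_l$ and $\det\rho(\mathrm{Frob}_l)\equiv \chi(l)l^{k-1}\pmod M$. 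The representation factors through a finite Galois extension, so by the Chebotarev density theorem the primes $l$ with $\rho(\mathrm{Frob}_l)=I$ (the Frobenius of the totally split primes) form a set of density $1/\#\,\mathrm{im}\,\rho>0$. For such $l$ one has $T_l\equiv \mathrm{tr}(I)=2$ as an operator on the mod $M$ reductions of $S_k(\Gamma_0(N),\chi)$, and $\chi(l)l^{k-1}\equiv\det(I)=1\pmod M$; these are precisely the two conditions used above. Taking $S_M$ to be this set, intersected over the nebentypus components, finishes the proof.

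The main obstacle is exactly this last step: producing a single two-dimensional representation over the finite ring $\mathbb{T}/M\mathbb{T}$ whose traces simultaneously recover all of the $T_l$. One must decompose $F$ into Hecke eigenforms, glue the $\ell$-adic representations of the individual systems into a representation of the product of the relevant coefficient rings, and reduce modulo $M$, with extra care when the residual representations are reducible or when several eigenforms satisfy a common congruence. Alternatively one can bypass the geometric input and use Serre's original equidistribution theorem for the data $(T_l\bmod M,\ l\bmod M)$ — the same circle of ideas underlying Theorem \ref{Serre's theorem} — which directly guarantees that the admissible value ``$T_l$ acts as $2$ and $\chi(l)l^{k-1}\equiv1$'' is realized on a set of primes of positive density. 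Either route supplies the required set $S_M$, and combining it with the elementary recursion above completes the argument.
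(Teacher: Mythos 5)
The paper does not actually prove this statement: it is quoted verbatim as a theorem of Serre with a citation to \emph{Divisibilit\'e de certaines fonctions arithm\'etiques}, so there is no in-paper argument to compare yours against. Judged on its own terms, your proposal is the standard proof and is essentially correct. The formal half is fine: once $l$ satisfies $F\mid T_l\equiv 2F\pmod M$ and $\chi(l)l^{k-1}\equiv 1\pmod M$, comparing the coefficient of $q^{nl^r}$ gives $a(nl^{r+1})+a(nl^{r-1})\equiv 2a(nl^r)$, the differences $a(nl^{r+1})-a(nl^r)$ are constant $\equiv a(n)$, and $a(nl^r)\equiv(r+1)a(n)$ follows by induction; this is exactly how Serre (and, following him, Ono and Ahlgren) deduce the coefficient identity. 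You have also correctly isolated where all the content lies, namely the positive-density claim, and your Deligne-plus-Chebotarev sketch has the right shape. Two remarks on that step. First, the cleaner standard formulation avoids the separate determinant condition by simply restricting to primes $l\equiv 1\pmod{NM}$ (which forces $\chi(l)=1$ and $l^{k-1}\equiv 1\pmod M$) and then asking only that Frobenius be trivial in each residual representation; this also sidesteps the need for a single representation over $\mathbb{T}/M\mathbb{T}$, since one may instead take the product of the representations attached to a basis of newform eigensystems (together with the cyclotomic character mod $NM$) and apply Chebotarev to that finite-image product, using diagonalizability of the $T_l$, $l\nmid N$, to pass from eigenforms back to $F$. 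Second, the nebentypus components of a general integer-weight form have coefficients in $\mathbb{Z}[\zeta]$ rather than $\mathbb{Z}$, so the congruences must be run modulo $M\mathbb{Z}[\zeta]$ and intersected back to $\mathbb{Z}$; you flag this only implicitly. Neither point is a gap in the logic so much as a place where the details you defer to Serre genuinely require care, which is consistent with the paper's own choice to cite the result rather than reprove it.
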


\begin{proof}[Proof of Theorem \ref{boring}]
	For the fixed positive integers $m^j$ and $k$, by Theorem \ref{consturction}, there exists a cusp form $f_{k,m,j}$  with rational coefficients. Denote by $S_{m^j}$ the set of primes for $f_{k,m,j}$  guaranteed by the above theorem. The quantitative estimate in the case $r\equiv 0 \;(\mathrm{mod}\;m^j)$ follows from Corollary \ref{Rough estimate}. 
	
	For the case $r\not\equiv 0 \;(\mathrm{mod}\;m^j)$, we recall that there is one $n_0=\frac{mn-k+1}{24}$ $(n\in \mathbb{Z})$ such that 
	$$b_k(n_0)\equiv e \;(\mathrm{mod}\;m^j),\quad (e,m)=1.$$
    Then for each $i=1,2,\cdots,m^j-1$, let $r_i \equiv i(2e)^{-1}-1 \;(\mathrm{mod}\;m^j)$ and $r_i>0$. Obviously, there exists a prime $l_0\in S_{m^j}$ coprime to $n$ that
     $$b_k \left (  \frac{mnl_0^{r_i}-k+1}{24}  \right ) \equiv (r_i +1)b_k(n_0) \;(\mathrm{mod}\;m^j)$$
    for each $i$ $(1\leqslant i \leqslant m^j-1)$. Fix $l_0$, then for a given $i$, use Theorem \ref{sea(n)} again, we find for all but finitely primes $l \in S_{m^j}$,
$$b_k \left (  \frac{mnl_0^{r_i}l-k+1}{24}  \right ) \equiv 2(r_i +1)b_k(n_0) \equiv  i \;(\mathrm{mod}\;m^j)$$
for each $i$.
    Since the set of primes $S_{m^j}$  has a positive density, we finally get 
    $$\#\left \{ n<N:b_k(n)\equiv r_i\;(\mathrm{mod}\;m^j ) \right \} \gg \pi(N) \gg \frac{N}{\log N }.$$

\end{proof}

\begin{remark}
	In fact, one can have a better result for Theorem \ref*{boring}. For instance, for a fixed integer $k\geqslant2$ and a prime $m\geqslant5$, one can obtain a value range of the $n_0$ in theorem if it exists. The technique is following Lovejoy \cite{lovejoy} that use the Sturm's bound \cite[Theorem 1]{sturm bound} as follows. 
\end{remark}

\begin{theorem}[J. Sturm]
	\label{Sturm's theorem}
	Suppose $f(z)=\sum_{n=0}^\infty a(n)q^n\in M_k(\Gamma_0(N),\chi)_m$ such that
	$$a(n)\equiv0\ (\mathrm{mod}\ m)$$
	
	\noindent
	for all $n\leq \frac{kN}{12}\prod_{p|N}\left( 1+\frac1p \right)$. Then $a(n)\equiv0\ (\mathrm{mod}\ m)$ for all $n\in\mathbb{Z}$.
\end{theorem}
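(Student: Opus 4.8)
The plan is to prove this (Sturm's bound) by reducing everything to the case of a prime modulus acting on level-one forms, where the valence formula for $SL_2(\mathbb{Z})$ provides the quantitative input. First I would dispose of the composite modulus. Writing $m=\prod p^{a}$, the Chinese Remainder Theorem reduces the claim to one prime power $p^{a}$ at a time, and prime powers descend to primes: if $b=\min_n v_p(a(n))$ is the least $p$-adic valuation occurring among the coefficients and $b<a$, then $p^{-b}f$ is again a form in $M_k(\Gamma_0(N),\chi)$ with integer coefficients whose reduction mod $p$ is nonzero, while the hypothesis forces its first $\tfrac{kN}{12}\prod_{p\mid N}(1+\tfrac1p)$ coefficients to be $\equiv 0\pmod p$; the prime case then makes all its coefficients vanish mod $p$, contradicting the minimality of $b$. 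Hence it suffices to show that for $m=p$ prime, whenever $\bar f\neq 0$ some coefficient $a(n)\not\equiv0\pmod p$ occurs with $n\le \tfrac{kN}{12}\prod_{p\mid N}(1+\tfrac1p)$.

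Second, I would pass from level $N$ to level one. Let $r=[SL_2(\mathbb{Z}):\Gamma_0(N)]=N\prod_{p\mid N}(1+\tfrac1p)$ and let $\gamma_1=I,\dots,\gamma_r$ be coset representatives of $\Gamma_0(N)\backslash SL_2(\mathbb{Z})$. Forming the norm $g:=\prod_{i=1}^{r} f\mid_k\gamma_i$ yields a holomorphic modular form of weight $K:=kr$ on all of $SL_2(\mathbb{Z})$: for $\delta\in SL_2(\mathbb{Z})$ the matrices $\gamma_i\delta$ permute the cosets up to elements of $\Gamma_0(N)$, so $g\mid\delta=\epsilon(\delta)g$ for a character $\epsilon$ of $SL_2(\mathbb{Z})$, and since $SL_2(\mathbb{Z})^{\mathrm{ab}}\cong\mathbb{Z}/12$ a harmless power of $g$ kills $\epsilon$ without affecting the ratio $(\text{vanishing order})/(\text{weight})$. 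Because $\gamma_1=I$ contributes $f$ itself and every other factor is holomorphic at $\infty$, the vanishing order of $\bar g$ at the cusp satisfies $\mathrm{ord}_\infty(\bar g)\ge \mathrm{ord}_\infty(\bar f)=v(\bar f)$. Thus it remains to bound the cuspidal vanishing order of a nonzero weight-$K$ reduction on $SL_2(\mathbb{Z})$ by $K/12$, since $K/12=\tfrac{kN}{12}\prod_{p\mid N}(1+\tfrac1p)$ is exactly the Sturm bound.

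Third comes the engine. For a genuine nonzero holomorphic form of weight $K$ on $SL_2(\mathbb{Z})$ the valence formula gives $v_\infty+\tfrac12 v_i+\tfrac13 v_\rho+\sum_P v_P=K/12$, so its order of vanishing at $\infty$ is at most $K/12$. To transfer this to the mod $p$ reduction I would normalize by the Eisenstein series $E_4,E_6$, whose constant term is $1$ and which therefore reduce to units of $\mathbb{F}_p[[q]]$: multiplying by a monomial $E_4^aE_6^b$ leaves the mod $p$ vanishing order unchanged but makes the weight a multiple of $12$, after which one compares with powers of $\Delta=\eta^{24}$ (reducing to $q$ times a unit). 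Invoking the structure of the graded ring of modular forms mod $p$ then pins $\mathrm{ord}_\infty(\bar g)\le K/12$, closing the chain $v(\bar f)\le \mathrm{ord}_\infty(\bar g)\le K/12$.

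The hard part is exactly this last transfer. Reduction mod $p$ can only \emph{increase} the apparent cuspidal vanishing order (low-order integer coefficients may acquire a factor of $p$), so the complex valence formula does not apply to $\bar g$ directly; the genuine content is the level-one prime statement that a nonzero weight-$K$ form mod $p$ still vanishes to order at most $K/12$, which relies on the theory of modular forms mod $p$ and is cleanest for $p\ge 5$ (where the $E_4,E_6,\Delta$ normalizations are $p$-integral). The excluded small primes, together with the integrality bookkeeping for the norm $g$ over $\mathbb{Z}[\zeta_N]$ and the comparison of vanishing of $f$ mod $p$ with that of $g$ modulo a prime above $p$, are the technical points one must verify separately.
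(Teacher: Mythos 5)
The paper does not prove this statement: it is quoted, with attribution, from Sturm's paper (the reference labelled ``sturm bound''), and is only used in a remark. So there is no in-paper argument to compare against, and your proposal has to be judged on its own. Your overall architecture is the standard one --- reduce to a prime modulus, pass to level one by taking the norm $g=\prod_i f\mid_k\gamma_i$ over coset representatives, and then bound the cuspidal vanishing order of a nonzero mod-$p$ form of weight $K=k[SL_2(\mathbb{Z}):\Gamma_0(N)]$ by $K/12$ --- and the first two reductions (CRT plus dividing out the minimal $p$-adic valuation; the norm trick with the $\mathbb{Z}/12$ abelianization killing the residual character) are essentially right.

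The genuine gap is the step you yourself flag and then do not close. First, the level-one engine: knowing that $\bar g$ vanishes to order $>K/12$ does not let you write $g$ as a multiple of a power of $\Delta$ as a complex form, so the $E_4^aE_6^b$/$\Delta$ normalization you sketch does not by itself yield $\mathrm{ord}_\infty(\bar g)\le K/12$, and appealing to the Swinnerton--Dyer structure of the graded ring mod $p$ is both heavier than needed and genuinely problematic at $p=2,3$, which your statement does not exclude. The clean argument is the echelonized integral (Miller) basis of $M_K(SL_2(\mathbb{Z}))$: there are forms $g_0,\dots,g_{d-1}$ with integer coefficients and $g_i=q^i+O(q^d)$, $d=\dim M_K\le K/12+1$, forming a $\mathbb{Z}$-basis of the integral forms; any nonzero $\mathbb{F}_p$-combination then visibly has $\mathrm{ord}_\infty\le d-1\le K/12$, for every prime $p$. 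Second, the nonvanishing of $\bar g$: the coefficients of $f\mid_k\gamma_i$ for $i\ge 2$ lie in $\mathbb{Z}[\zeta_N]$ (up to bounded denominators), and a priori every coefficient of some factor could lie in the prime $\mathfrak p$ above $p$, making $\bar g=0$ and the valence bound vacuous; one must prove a content lemma controlling $\mathrm{ord}_{\mathfrak p}$ of each conjugate factor (equivalently, that the $\mathfrak p$-adic content of $f\mid_k\gamma_i$ equals that of $f$, which follows from a Galois/$q^{1/N}$-expansion argument). This is not mere bookkeeping --- without it the inequality $\mathrm{ord}_{\mathfrak p}(\bar f)\le\mathrm{ord}_{\mathfrak p}(\bar g)\le K/12$ that your chain relies on is not established. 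With those two points supplied, your outline becomes a complete proof.
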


\begin{proof}[Proof of Theorem \ref{thm 1.3}]
Suppose that $M$ is a positive integer which is coprime to $6$. Let $M=\prod_{i=1}^{I}m_i^{j_i}$ be its prime factorization. Then by definition $P_M$ is defined that $P_M :=\prod_{i=1}^{I}m_i.$ By Theorem \ref{consturction}, for each $i$ and integer $k\geqslant2$ we have corresponding cusp forms as follows 
\begin{equation}\label{fact_1}
	f^{k,m_i}(z)\in S_{\frac{m^{i+1}-m^i}{2} }( \Gamma _0(576km_i), ( \frac{km_i}{\boldsymbol{\cdot} })) 
\end{equation}
with the property that 
\begin{equation}\label{fact_2}
	f^{k,m_i}(z)\equiv \sum_{\substack{n\geqslant 0 \\ m_in\equiv k-1   \;(\mathrm{mod}\;24)}} b_k\left ( \frac{m_in-k+1}{24}  \right ) q^n \;(\mathrm{mod}\; m_i^{j_i}).
\end{equation}

Our next  argument mainly applies  Serre's important results, and this discussion is similar to \cite{Ono,Scott com M}.
Note that each form $f^{k,m_i}(z)$ has rational integer coefficients, then we can use Theorem \ref*{Serre's theorem} for all forms above.

For every $m_i$ we write $f^{k,m_i}(z)=\sum_{n=1}^{\infty } a^{k,m_i}(n)q^n$, then Theorem \ref{Serre's theorem} tells us for each form $f^{k,m_i}(z)$ there exists a corresponding positive density set of the primes $S_i$ have the property
\begin{equation}\nonumber
	f^{k,m_i}(z) \mid T(l_i^j) \equiv 0 \;(\mathrm{mod}\; m_i^{j_i})
\end{equation}
where $T(l_i^j)$ denote the usuall $Hecke$ $operator $ for an arbitrary prime $l_i^j \in S_i$. Morever, $ l_i^j  \equiv -1 \;(\mathrm{mod}\; 576kM)$. Then by Definition \ref{def of hecke operator} and the fact (\ref{fact_1}), we can see 
$$a^{k,m_i}(l_i^{j}n)+(\frac{km_i}{l_i^j})\cdot(l_i^j)^{{\frac{m^{i+1}-m^i-2}{2} }}\cdot a^{k,m_i}(n/l_i^j)\equiv 0\;(\mathrm{mod}\;m_i^{j_i})$$
for all primes $l_i^j \in S_i$. Recall from (\ref{fact_2}) that $a^{k,m_i}(n)\equiv b_k\left ( \frac{m_in-k+1}{24}  \right ) \;(\mathrm{mod}\;m_i^{j_i})$ vanishs when $\frac{m_in-k+1}{24} $ in not an integer, then we get 
\begin{equation}\label{Key}
	a^{k,m_i}(l_i^{j}n)=b_k\left ( \frac{m_il_i^jn-k+1}{24}  \right )\equiv 0\;(\mathrm{mod}\;m_i^{j_i})
\end{equation}
for all primes $l_i^j \in S_i$ and every positive integer $n$ coprime to $l_i^j$.

Next, for each set of prime numbers $S_i$ $(1\leqslant i \leqslant I-1)$, we fix a corresponding prime number $l_i$ $(1\leqslant i \leqslant I-1)$ respectively. And since $S_i$ are all positive density sets, we can guarantee that the selected prime numbers  $l_i$ are pairwise coprime and all coprime to $P_M$ . As for the set $S_I$, we choose all the prime numbers that are different from $l_i$ $(1\leqslant i \leqslant I-1)$  and coprime to $P_M$ to form a new set which is denoted as $S_I^{\prime}$. 

Then we define a new set as follows
$$S=\left \{ L:L=l_{I^{\prime }}^j\cdot  \prod_{i=1}^{I-1} l_i,\ l_{I^{\prime }}^j\in S_I^{\prime} \right \}.$$
It is easy to see that $L\equiv \pm 1 \;(\mathrm{mod}\;576kM)$  and the two sets $S,$ $S_I$ have the same density. Morever, by (\ref{Key}), we finally get that 
$$ b_k\left ( \frac{P_{M}Ln-k+1}{24}  \right ) \equiv 0 \;(\mathrm{mod}\;M)$$
for every positive integer $n$ coprime to $L$.

This completes the proof.
\end{proof}

\end{document}